\newtheorem{thm}{Theorem}[section]
\newtheorem{lem}[thm]{Lemma}
\newtheorem{prop}[thm]{Proposition}
\theoremstyle{definition}
\newtheorem{exm}[thm]{Example}
\newtheorem{rem}[thm]{Remark}
\numberwithin{equation}{section}
\DeclareMathOperator{\depth}{depth}
\DeclareMathOperator{\reg}{reg}
\def\Zset{\mathbb {Z}}
\def\Itil {\tilde{I}}
\def\mfr {\mathfrak m}
\begin{document}
\title[Hilbert coefficients] {Upper bounds on two Hilbert coefficients  }
\author{Le Xuan Dung$^1$, Juan Elias$^2$ and Le Tuan Hoa$^3$ }
\address{$^1$Faculty of  Natural Science,  Hong Duc University\\
565 Quang Trung, Dong Ve, Thanh Hoa, Vietnam}
\address{ $^2$Facultat de Matemàtiques i Informàtica, Universitat de Barcelona, Gran Via 585, 08007
Barcelona, Spain}
\address{  $^3$Institute of Mathematics, VAST, 18 Hoang Quoc Viet, 10307 Hanoi, Viet Nam}
\email{lexuandung@hdu.edu.vn, lxdung27@gmail.com (L. X. Dung), elias@ub.edu (J. Elias),  lthoa@math.ac.vn (L. T. Hoa)}
\subjclass{13D40, 13H10, 13D45}
\keywords{Hilbert function, Hilbert coefficient, Cohen-Macaulay, Local cohomology}
\date{}
\commby{}
%-----------------------------------------------------------
\begin{abstract}  New upper bounds on the first and the second Hilbert coefficients of a Cohen-Macaulay module over a local ring are given. Characterizations are provided for some upper bounds to be attained. The characterizations are given in terms of Hilbert series as well as in terms of the Castelnuovo-Mumford regularity of the associated graded module.
\end{abstract}
% -----------------------------------------------------------
\maketitle
% -----------------------------------------------------------

%%%%%%%%%%%%%%%%%%%%%%%%%%%%%%%%%%%%%%%%%%%%%%%%%%%%%%%%%%%%%%%%%%%%%%%%%%%%%%%%%%%%%%%
%%%%%%%%%%%%%%%%%%%%%%%%%%%%%%%%%%%%%%%%%%%%%%%%%%%%%%%%%%%%%%%%%%%%%%%%%%%%%%%%%%%%%%%
%%%%%%%%%%%%%%%%%%%%%%%%%%%%%%%%%%%%%%%%%%%%%%%%%%%%%%%%%%%%%%%%%%%%%%%%%%%%%%%%%%%%%%%
%%%%%%%%%%%%%%%%%%%%%%%%%%%%%%%%%%%%%%%%%%%%%%%%%%%%%%%%%%%%%%%%%%%%%%%%%%%%%%%%%%%%%%%
%%%%%%%%%%%%%%%%%%%%%%%%%%%%%%%%%%%%%%%%%%%%%%%%%%%%%%%%%%%%%%%%%%%%%%%%%%%%%%%%%%%%%%%
\section{Introduction}

Let $(A,\mfr)$ be a  Noetherian   local ring with an  infinite  residue field $ A/\mfr$ and $I$ an $\mfr$-primary ideal. Let $M$ be a finitely generated  $A$-module of dimension $d$. 
Then the Hilbert-Samuel function    $H^1_{I,M} (n) := \ell_A(M/I^{n+1}M)$ agrees with a polynomial, so-called  Hilbert-Samuel polynomial, $H\!P^1_{I,M} (n) $ for all $n\gg 0$. 
If we write 
$$H\!P^1_{I,M} (n) = e_0(I,M)\binom{n+d}{d} - e_1(I,M)\binom{n+d-1}{d-1} + \cdots + (-1)^d e_d(I,M),$$
then the integers $e_0(I,M), ..., , e_0(I,M)$ are called the {\it Hilbert coefficients of $M$ with respect to $I$}.
 
 There are intensive studies on finding bounds on several Hilbert coefficients. We refer the interested  readers to the book \cite{RV2} for main developments and rich references. 
 Our aim is to give  new bounds on the first two Hilbert coefficients in terms of $e_0(I,M)$ for Cohen-Macaulay modules. 
Recall that $e_0(I,M), e_1(I,M), e_2(I,M)\ge 0$ and that $e_3(I,M)$ could be negative, see \cite{Na} and \cite{RV2}.
 
In this paper we establish two upper bounds on $e_1(I,M)$. 
The first one (Proposition \ref{H1}) holds for Cohen-Macaulay modules and is a slight improvement of the bound given by Rossi and Valla for filtrations in \cite[Proposition 2.8 and Proposition 2.10]{RV2}. 
 Moreover, in the case of dimension one, we can give conditions for this bound to be attained (see Proposition \ref{H3} and Proposition \ref{T2}). 
 In difference to the approach in \cite[Section 2.2]{RV2}, we use here local cohomology modules of the associated graded modules $G_I(M)$. 
 As a consequence,  we can show that  if $IM \subseteq\mfr^bM$ for some $b\ge 2$, then 
 $e_1(I,M) \le \binom{e_0(I,M) -b}{2}$, see Proposition \ref{H2}.  
 In the ring case, this result was given by  Elias \cite[Proposition 2.5 and Remark 2.6]{E1} under an additional condition, which was then removed by Hanumanthu and Huneke in \cite[Corollary 3.7]{HH}. 
 This part can be also seen as a preparation for our study on the $e_2(I,M)$ in Section \ref{e2Mo}.
 
 The second upper bound on $e_1(I,M)$ (Theorem \ref{EB3}) only holds for $M=A$ and is based on the bound given by Elias \cite{E2} in the dimension one case. 
 When $b\ge 2$, this new bound provides a much better bound than the above mentioned bound 
 $\binom{e_0(I,A) -b}{2}$, see Remark \ref{EB4}. 

For $e_2(I,M)$ we can give a  new upper bound in terms of $e_0(I,M)$ and $b$, see Theorem \ref{B1}. As we know that the first upper bound on $e_2(I,M)$ was given by Rhodes, see \cite[Proposition 6.1(iv)]{Rh} (also see \cite[Corollary 4.2]{KM} for modules over a semi-local ring). 
In \cite[Theorem 2.3]{ERV1}  a much better bound is given. However, all bounds on $e_2(I,M)$ in \cite{Rh, KM, ERV1} involve $e_1(I,M)$, while our bound  only depend on $e_0(I,M)$ (and the largest number $b$ such that $IM \subseteq \mfr^bM$). 
Moreover, we can characterize when this bound is attained (Theorem \ref{B2}). Like  Proposition \ref{H3},  the conditions in Theorem \ref{B2} are given in terms of Hilbert series as well as in terms of the Castelnuovo-Mumford regularity of the associated graded module $G_I(M)$.  
In the case $M=A$ and $b\ge 2$, using Theorem \ref{EB3} one can get a better bound than the one in Theorem \ref{B1},  see  Theorem \ref{AB8}.

\medskip
We now give a brief content of the paper. In Section \ref{P} we recall some basic notions and  give some estimations on the Hilbert function of $G_I(M)$ and of $\overline{G_I(M)}$. 
In Section \ref{e1Mo} we give two bounds on $e_1(I,M)$ (see Propositions \ref{H1} and \ref{H2}) and characterize when the first bound in Propositions \ref{H1} is attained, provided $\dim M= 1$. In Section \ref{e1Id} we restrict to the case $M=A$. 
Here we give further structures of $I$ and $A$ such that the first bound in Propositions \ref{H1} is attained, see Proposition \ref{T2}. 
Then we prove an essentially new bound on $e_1(I)$ (Theorem \ref{EB3}). 
Main known upper bounds on $e_1(I)$ of an $\mfr$-primary ideal $I$ of an one-dimensional Cohen-Macaulay ring $(A,\mfr)$ such that $I\varsubsetneq \mfr^2$  are summarized  in Remark \ref{EB5}. 
In the last Section \ref{e2Mo}, we prove the new bounds on $e_2(I,M)$ (Theorems \ref{B1} and \ref{AB8}), and give equivalent conditions for the  bound in Theorem \ref{B1} to be  attained  (Theorem \ref{B2}).

%%%%%%%%%%%%%%%%%%%%%%%%%%%%%%%%%%%%%%%%%%%%%%%%%%%%%%%%%%%%%%%%%%%%%%%%%%%%%%%%%%%%%%%%%%%%%%%%%%%%
%%%%%%%%%%%%%%%%%%%%%%%%%%%%%%%%%%%%%%%%%%%%%%%%%%%%%%%%%%%%%%%%%%%%%%%%%%%%%%%%%%%%%%%%%%%%%%%%%%%%
%%%%%%%%%%%%%%%%%%%%%%%%%%%%%%%%%%%%%%%%%%%%%%%%%%%%%%%%%%%%%%%%%%%%%%%%%%%%%%%%%%%%%%%%%%%%%%%%%%%%
%%%%%%%%%%%%%%%%%%%%%%%%%%%%%%%%%%%%%%%%%%%%%%%%%%%%%%%%%%%%%%%%%%%%%%%%%%%%%%%%%%%%%%%%%%%%%%%%%%%%
%%%%%%%%%%%%%%%%%%%%%%%%%%%%%%%%%%%%%%%%%%%%%%%%%%%%%%%%%%%%%%%%%%%%%%%%%%%%%%%%%%%%%%%%%%%%%%%%%%%%
\medskip
\section{Preliminaries} \label{P}

Let $R= \oplus_{n\ge 0}R_n$ be a Noetherian standard graded ring over a local Artinian ring $(R_0,\mfr_0)$.  
Let $E$ be a finitely generated graded module of dimension $d$. 
 The function $H_E(n) := \ell_{R_0}(E_n)$ is called {Hilbert function} of $E$. 
 For all $n\gg 0$, it agrees with the so-called {\it Hilbert polynomial} denoted by $H\!P_E(t)$, that is polynomial of degree $d-1$. 
 The number
 $$pn(E) :=  \min\{n |\  H_E(t) =  H\!P_E(t) \ \text{for all}\ t\ge n \},$$
 is called the {\it postulation number} of $H_E$.
 
If we denote by $R_+ := \oplus_{n> 0}R_n$ the irrelevant ideal of $R$ then we set
$$ a_i (E) := \sup\{ n|\ H^i_{R_+}(E)_n \neq 0\},$$
$0\le i \le d$.
The {\it Castelnuovo-Mumford regularity} of $E$ is the number:
$$\reg(E) := \max\{a_i(E) + i|\ 0\le i \le d\}.$$

\medskip
Let $(A,\mfr)$ be a  Noetherian   local ring with an  infinite  residue field $ A/\mfr$ and $I$ an $\mfr$-primary ideal. 
Let $M$ be a finitely generated  $A$-module of dimension $d$. 
The associated graded module of $I$ with respect to $M$ is the standard $A/I$-algebra
$$G_I(M) := \oplus_{n\ge 0}I^nM/I^{n+1}M,$$

There is another way to define the Hilbert coefficients $e_i(I,M)$ already defined in the introduction. 
We recall here this approach from \cite[Section 1.3]{RV2}. 
The function
$$H_{I,M} (n):= H_{G_I(M)}(n)=\ell_A(I^nM/I^{n+1}M)$$
is called the {\it Hilbert function} of $M$. 
The Hilbert polynomial of $M$ is $H\!P_{I,M}=H\!P_{G_I(M)}$.

By the Hilbert-Serre theorem, the  Hilbert series 
\begin{equation}\label{ser} P_{I,M}(z) : = \sum_{n\ge 0} H_{I,M} (n) z^n,
\end{equation} 
is a rational function of $z$, that means one can find a polynomial $Q_{I,M}(z)  \in \Zset[z]$ such that $Q_{I,M}(1) \neq 0$ and 
$$P_{I,M}(z) = \frac{Q_{I,M}(z)}{(1-z)^d}.$$
If we set for every  $i\ge 0$ 
\begin{equation}\label{EP1}e_i(I,M) = \frac{Q_{I,M}^{(i)}(1)}{i!},
\end{equation}
where $Q_{I,M}^{(i)}$ denotes the $i$-th derivation of $Q_{I,M}$, then for all $0\le i \le d$, this value of $e_i(I,M)$ agrees with the one defined in the introduction. 
Moreover, unlike the definition in the introduction, using  (\ref{EP1}) we can talk about the Hilbert coefficients $e_i(I,M)$ with $i>d$. 
This simple observation is useful in the study of the second Hilbert coefficient, where we can reduce the case of dimension two to dimension one.

Together with Hilbert series, the power series
$$P^1_{I,M}(z) : = \sum_{n\ge 0} \ell(M/I^{n+1}M)z^n = \frac{P_{I,M}(z)}{(1-z)^{d+1}}$$
is also often used; this is called {\it Hilbert-Samuel series}.

In the sequel we use the following notations
$$pn(I,M) := pn(G_I(M)) = \min\{n|\ H\!P_{I,M}(t) = H_{I,M}(t) \ \text{for all}\ t\ge n\},$$
\noindent
and if $M=A$ then we write $H_I:=H_{I,A}$, 
$H\!P_I:=H\!P_{I,A}$, $pn(I):= pn(I,A)$,  $e_i(I) := e_i(I,A)$  and so on.

\medskip
Recall that an element $x\in I$ is called {\it $M$-superficial (of order one) for} $I$,  if there exists a non-negative integer $c$ such that
$$(I^{n+1}M:x) \cap I^cM = I^nM,$$
for all $n\ge c$.  
When $M= A$ we simply say that $x$ is a superficial element for $I$. 
This is equivalent to the condition that the initial form $$x^* \in G(I):= \oplus_{n\ge 0}I^n/I^{n+1}$$
has degree one and it is a filter-regular element on the associated graded module
$G_I(M)$ which means
$$[ 0:_{G_I(M)} x^*]_m = 0 \ \text{for all}\ m\gg 0.$$
(See, e.g., the equivalence of (1) and (5) in \cite[Theorem 1.2]{RV2}.)  
A sequence $x_1,...,x_s\in I$ is called {\it $M$-superficial sequence for $I$} if $x_i$ is an  $M/(x_1,...,x_{i-1})M$-superficial element, for $I$ for $i=1,...,s$. 

The following result is now standard and useful for proceeding by induction.

\medskip
\begin{lem}\label{I2}{\rm (See, e.g.,  \cite[Proposition 1.2]{RV1})}  Let $x \in I $ be an $M$-superficial element for $I$. Then
\begin{itemize}
 \item[(i)]  $\dim(M/xM) = d - 1,$
 \item[(ii)]  $e_j (I,M/xM) = e_j(I,M)$ for every $j = 0,...,d-2,$
 \item[(iii)]  $e_{d-1}(I,M/xM) = e_{d-1}(I,M) + (-1)^{d-1}\ell(0:x),$
 \item[(iv)]  There exists an integer  $n_0$ such that for every  $n \geq n_0-1$ we have
$$ e_d(I,M/xM) = e_d(I,M) +  (-1)^d\left[\sum_{i=0}^n\ell(I^{n+1}M:x/I^nM) - (n+1)\ell(0:x)\right],$$
 \item[(v)] $x^*$ is a regular element on $G_I(M)$ if only if  $P_{I,M}(z) = P^1_{I,M/xM}(z) = \frac{P_{I,M/xM}(z)}{1-z}$ if only if $x$ is $M$-regular and $e_d(I,M) = e_d(I,M/xM)$.
\end{itemize}
\end{lem}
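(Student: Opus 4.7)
The plan is to run everything off a single short exact sequence obtained by comparing $M/I^n M$ with $M/I^{n+1}M$ via multiplication by $x$, then extract (i)--(v) by reading lengths, coefficients, and series in turn. Part (i) is immediate: $x\in I$ is superficial for the $\mfr$-primary ideal $I$, so it can be completed to a system of parameters of $M$, forcing $\dim(M/xM)=d-1$.

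For (ii) and (iii), the sequence
\[
0 \to (I^{n+1}M : x)/I^n M \to M/I^n M \xrightarrow{x} M/I^{n+1}M \to (M/xM)/I^{n+1}(M/xM) \to 0
\]
yields the length identity
\[
H^1_{I, M/xM}(n) = H^1_{I, M}(n) - H^1_{I, M}(n-1) + \ell\bigl((I^{n+1}M : x)/I^n M\bigr).
\]
A standard consequence of superficiality (Artin--Rees applied after factoring out $(0:_M x)$) is $(I^{n+1}M : x) = I^n M + (0 :_M x)$ for $n \gg 0$, so the correction stabilizes at $\ell(0 :_M x)$. Using $\binom{n+d-i}{d-i} - \binom{n+d-i-1}{d-i} = \binom{n+d-i-1}{d-i-1}$ to expand $HP^1_{I,M}(n) - HP^1_{I,M}(n-1)$ and matching this coefficient-by-coefficient against the expansion of $HP^1_{I,M/xM}(n)$ (a polynomial of degree $d-1$) yields the equalities in (ii) and the constant-term shift in (iii).

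Part (iv) needs more care because $e_d(I, M/xM)$ is only accessible through (\ref{EP1}) since $\dim(M/xM)=d-1$. I would promote the length identity to a series identity
\[
P^1_{I, M/xM}(z) = (1-z)\,P^1_{I, M}(z) + A(z), \qquad A(z) := \sum_{n \ge 0} \ell\bigl((I^{n+1}M : x)/I^n M\bigr) z^n,
\]
and split $A(z) = \tilde A(z) + \ell(0 :_M x)/(1-z)$, where $\tilde A(z)$ is a genuine polynomial because the error $\ell((I^{n+1}M:x)/I^n M) - \ell(0:_M x)$ vanishes for $n \gg 0$. Multiplying through by $(1-z)^d$ gives
\[
Q_{I, M/xM}(z) = Q_{I, M}(z) + \tilde A(z)(1-z)^d + \ell(0:_M x)(1-z)^{d-1}.
\]
The last summand has degree $d-1$ and drops out after $d$ differentiations at $z=1$, while Leibniz collapses the middle one to $(-1)^d d!\,\tilde A(1)$; dividing by $d!$ gives $e_d(I, M/xM) - e_d(I, M) = (-1)^d \tilde A(1)$, and since $\tilde A(1) = \sum_{i=0}^{n}[\ell((I^{i+1}M:x)/I^i M) - \ell(0:_M x)]$ for any $n \ge n_0 - 1$, this is exactly (iv).

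For (v), if $x^*$ is regular on $G_I(M)$ then Valabrega--Valla gives $G_I(M/xM) \cong G_I(M)/x^* G_I(M)$, and the exact sequence $0 \to G_I(M)(-1) \xrightarrow{x^*} G_I(M) \to G_I(M/xM) \to 0$ delivers $P_{I, M/xM}(z) = (1-z) P_{I,M}(z)$, i.e.\ $P_{I,M}(z) = P^1_{I,M/xM}(z)$; Krull intersection then forces $(0 :_M x) = 0$, and (iv) yields $e_d(I,M) = e_d(I,M/xM)$. Conversely, from the series equality one reads off via (iv) that $\ell(0:_M x) = 0$ and $\tilde A \equiv 0$, i.e.\ $(I^{n+1}M:x) = I^n M$ in every degree, so $x^*$ is regular on $G_I(M)$. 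The main obstacle is the bookkeeping in (iv) together with this ``if'' direction of (v): one must peel off the stable tail of $A(z)$ so that differentiation at $z=1$ isolates exactly the non-polynomial residue, and then reverse the computation to convert a Hilbert-series identity into the vanishing conditions that characterize regularity of $x^*$ on $G_I(M)$.
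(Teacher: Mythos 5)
The paper gives no proof of this lemma (it is quoted from \cite[Proposition 1.2]{RV1}), and your argument is a correct, self-contained reconstruction along exactly the standard lines of that reference: the four-term exact sequence induced by multiplication by $x$, the stabilization $(I^{n+1}M:x)=I^nM+(0:_Mx)$ for $n\gg 0$ (with $(0:_Mx)\cap I^cM=0$ giving finiteness of $\ell(0:_Mx)$), and the Hilbert-series bookkeeping that isolates $(-1)^d\widetilde{A}(1)$ as the change in $e_d$. The only cosmetic points are that in the converse directions of (v) the vanishing of $A(z)$ is read off directly from the series identity $P^1_{I,M/xM}(z)=P_{I,M}(z)+A(z)$ rather than ``via (iv)'', and that for the implication from (c) one should note the summands $\ell\bigl((I^{i+1}M:x)/I^iM\bigr)$ are non-negative, so their sum vanishing forces each to vanish --- both of which your sketch effectively contains.
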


We would like to comment that in the above statements (iv) and (v), $e_d(I,M/xM)$ is the one defined by (\ref{EP1}).

The Ratliff-Rush closure of an ideal introduced in \cite{RR} plays an important role in the study of Hilbert functions, see e.g. \cite{RV1, RV2}. 
It is defined by
\begin{equation}\label{EP2}
\widetilde{I^nM} = \bigcup_{k\ge 1} I^{n+k}M : I^k = I^{n+l}: I^l \ \text{for some}\ l\gg 0.
\end{equation}
Using this notion, we can compute the zero-th local cohomology module of $G_I(M)$ with respect to $G_+:= \oplus_{n\ge 1}I^n/I^{n+1}$ as follows (see, e.g., \cite[p. 26]{RV1}):
\begin{equation}\label{EP3}
[H^0_{G_+}(G_I)M))]_n = \frac{\widetilde{I^{n+1}M} \cap I^n M}{I^{n+1}M}. 
\end{equation}
We set $\overline{G_I(M)} = G_I(M)/ H^0_{G_+}(G_I(M))$.

\medskip
\begin{lem} \label{I4} Let $M$ be an one-dimensional $A$-module. Let $b$ be a positive integer such that $IM \subseteq \mfr^b M$. 
Then
\begin{itemize}
\item[(i)] (\cite[Lemma 2.5]{D}) $\ell(\overline{G_I(M)}_0) \ge b$.
\item[(ii)] If $e_0(I,M) \neq e_0(\mfr^b, M)$, then $\ell(\overline{G_I(M)}_0) \ge b+1$.
\end{itemize} 
\end{lem}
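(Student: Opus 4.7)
The plan is to argue the contrapositive: assume $\ell(\overline{G_I(M)}_0)\le b$; then by (i), together with the identification $\overline{G_I(M)}_0 = M/\widetilde{IM}$ coming from (\ref{EP3}) in degree zero, one has $\ell(M/\widetilde{IM})=b$. I will show this forces $e_0(I,M)=e_0(\mfr^b,M)$, contradicting the hypothesis. The key structural input I aim to extract from the equality case is the containment $\mfr^b M\subseteq\widetilde{IM}$; once in hand, (\ref{EP2}) supplies a reduction-type identity $I^{k+1}M=\mfr^b I^k M$ for large $k$, from which $e_0(I,M)=e_0(\mfr^b,M)$ falls out of a multiplicity computation.

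For the chain argument, I consider the submodules $M_i:=\mfr^i M+\widetilde{IM}$, $0\le i\le b$, giving
\[
M=M_0\supseteq M_1\supseteq\cdots\supseteq M_b\supseteq\widetilde{IM}.
\]
I claim each inclusion $M_i\supsetneq M_{i+1}$ with $i<b$ is strict. If $M_i=M_{i+1}$, then $N:=(\mfr^i M+\widetilde{IM})/\widetilde{IM}$ satisfies $\mfr N=N$ (since $\mfr\widetilde{IM}\subseteq\widetilde{IM}$), so $N=0$ by Nakayama, i.e.\ $\mfr^i M\subseteq\widetilde{IM}$. By (\ref{EP2}) there is then $k\ge 1$ with $\mfr^i I^k M\subseteq I^{k+1}M$; combining with $I^{k+1}M\subseteq\mfr^b I^k M$ (which follows from $IM\subseteq\mfr^b M$) and $b\ge i+1$ gives $\mfr^i I^k M\subseteq\mfr\cdot(\mfr^i I^k M)$, and Nakayama forces $\mfr^i I^k M=0$, contradicting $\dim I^k M=1$. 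Hence the chain contributes $b$ strict inclusions, and $\ell(M/\widetilde{IM})=b$ forces the last containment $M_b\supseteq\widetilde{IM}$ to be an equality, i.e.\ $\mfr^b M\subseteq\widetilde{IM}$.

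Now (\ref{EP2}) applied to $\mfr^b M\subseteq\widetilde{IM}$ produces $k\ge 1$ with $\mfr^b I^k M\subseteq I^{k+1}M$; the reverse containment is automatic from $IM\subseteq\mfr^b M$, so $I^{k+1}M=\mfr^b I^k M$, and iterating yields $I^{k+j}M=\mfr^{bj}(I^k M)$ for every $j\ge 1$. For $j\gg 0$ I then equate
\[
e_0(I,M)(k+j)-e_1(I,M)=\ell(M/I^{k+j}M)=\ell(M/I^k M)+e_0(\mfr^b,I^k M)\,j-e_1(\mfr^b,I^k M),
\]
the right-hand side using the Hilbert--Samuel polynomial of $I^k M$ with respect to $\mfr^b$. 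Since $M/I^k M$ has finite length, $e_0(\mfr^b,I^k M)=e_0(\mfr^b,M)$, and comparing the coefficients of $j$ yields $e_0(I,M)=e_0(\mfr^b,M)$, the desired contradiction.

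The main obstacle is the strict-chain step: I must rule out $\mfr^i M\subseteq\widetilde{IM}$ for every $i<b$ in order to extract $b$ strict inclusions, and this rests on combining the description (\ref{EP2}) with a Nakayama-type argument applied to $I^k M$ (using $\dim I^k M=1$). Once $\mfr^b M\subseteq\widetilde{IM}$ is secured from the equality case, the reduction identity and the multiplicity comparison are routine.
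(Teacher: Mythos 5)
Your proof is correct and follows essentially the same route as the paper: both arguments reduce to showing that $\mfr^b M \subseteq \widetilde{IM}$ forces $I^{k+1}M = \mfr^b I^k M$ for large $k$ via (\ref{EP2}), and then compare Hilbert--Samuel polynomials to conclude $e_0(I,M) = e_0(\mfr^b,M)$. The only cosmetic differences are that you phrase the argument contrapositively and re-derive the strictness of the chain (\ref{EP4}), which the paper simply cites from \cite[Lemma 2.5]{D}.
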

\begin{proof} (i) is \cite[Lemma 2.5]{D}. 
It is based on the the fact $\ell(\overline{G_I(M)}_0) = \ell(E)$, where $E=M/\widetilde{IM}$, and the strict inclusions:
\begin{equation}\label{EP4}
E \supsetneq \mfr E \supsetneq \cdots \supsetneq \mfr^b E.
\end{equation}

\noindent
(ii) Assume that $\mfr^bE = 0$. 
Then $\mfr^b M\subseteq \widetilde{IM}$. By (\ref{EP2}), it implies that
$$I^{l+1}M \subseteq I^l( \mfr^b M ) \subseteq I^{l+1}M,$$
for some $l\gg 0$. Hence $I^{l+1}M = \mfr^bI^lM$. 
Let $c$ be an integer such that  $\mfr^{bc} \subseteq I^l$, then for all $n>0$, it yields 
$$I^{l+n} M = (\mfr^b)^n I^lM \supseteq (\mfr^b)^{n +c}M.$$
Hence 
$$\begin{array}{ll} (n+l) e_0(I,M)  - e_1(I,M) & = \ell(M/I^{l+n} M) \\ 
& \le \ell(M/(\mfr^b)^{n +c}M) = (n+c)e_0(\mfr^b,M) - e_1(\mfr^b,M),
\end{array}$$
for all $n\gg 0$. 
This implies $e_0(I,M) \le e_0(\mfr^b,M)$, whence $e_0(I,M) = e_0(\mfr^b,M)$, a contradiction to the assumption. So, we must have $\mfr^bE \neq 0$. 
From (\ref{EP4}) we then get $\ell(\overline{G_I(M)}_0) = \ell(E) \ge b+1$, as required.
\end{proof}

\medskip
 \begin{lem} \label{I5} Let $R$ be a Noetherian standard graded ring over a local Artinian ring and $E$  an one-dimensional  Cohen-Macaulay graded $R$-module. 
 Let $\Delta: = \Delta(E)$ be the maximal generating degree of $E$. 
 Then
 \begin{itemize}
\item[(i)] $H_E(\Delta) < H_E(\Delta + 1) < \cdots < H_E(pn(E))$,
\item[(ii)] $H_E(n) \geq (n- \Delta ) +  H_E(\Delta ) $ for all $\Delta +1 \leq n \leq pn(E).$
\end{itemize} 
\end{lem}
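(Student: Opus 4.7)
My approach is to reduce to a statement about the zero-dimensional module $E/xE$ for a judiciously chosen element $x\in R_1$. After a standard faithfully flat extension of the base ring $R_0$ (which preserves $H_E$, the Cohen-Macaulay property, $\Delta(E)$, and $pn(E)$) I may assume the residue field of $R_0$ is infinite. Then prime avoidance applied to $\mathrm{Ass}(E)$ produces an $E$-superficial element $x\in R_1$; since $E$ is one-dimensional Cohen-Macaulay, such an $x$ is a non-zerodivisor on $E$. The short exact sequence
$$0 \longrightarrow E(-1) \xrightarrow{\,x\,} E \longrightarrow E/xE \longrightarrow 0$$
then yields the basic identity $H_{E/xE}(n) = H_E(n) - H_E(n-1)$ for all $n$, so in particular $H_E$ is non-decreasing.

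The key combinatorial observation is that $E_{n+1} = R_1\cdot E_n$ for every $n \ge \Delta$: this follows because $E$ is generated in degrees $\le \Delta$ and $R$ is standard graded, so $E_{n+1} = \sum_{i\le\Delta}R_{n+1-i}E_i = R_1\sum_{i\le\Delta}R_{n-i}E_i = R_1 E_n$. Using this, I claim: if $H_{E/xE}(n)=0$ for some $n>\Delta$, then $H_{E/xE}(n+1)=0$. Indeed, $E_n = xE_{n-1}$ combined with $E_{n+1} = R_1 E_n$ gives $E_{n+1} = R_1 x E_{n-1} = xR_1 E_{n-1} \subseteq xE_n$, whence equality. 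By induction, once $H_{E/xE}$ vanishes at any $n_0 > \Delta$ it vanishes for all $n \ge n_0$, so $H_E$ becomes constant (equal to $e_0(E)$) from degree $n_0 - 1$ onwards; consequently $pn(E) \le n_0 - 1$.

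Taking the contrapositive proves (i): for every $n$ with $\Delta+1 \le n \le pn(E)$ we must have $H_{E/xE}(n) \ge 1$, that is $H_E(n) > H_E(n-1)$. Part (ii) follows immediately by telescoping,
$$H_E(n) = H_E(\Delta) + \sum_{k=\Delta+1}^{n}\bigl(H_E(k)-H_E(k-1)\bigr) \ge H_E(\Delta) + (n-\Delta),$$
valid in the range $\Delta+1 \le n \le pn(E)$. The only delicate point in the whole argument is the existence of a degree-one regular element on $E$, which is disposed of by the standard reduction to an infinite residue field; everything after that is a short bookkeeping exercise with the exact sequence above.
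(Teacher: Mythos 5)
Your proof is correct and takes essentially the same route as the paper's: both reduce modulo a degree-one non-zerodivisor $x\in R_1$ on $E$, use the identity $H_{E/xE}(n)=H_E(n)-H_E(n-1)$, and observe that $H_{E/xE}$ cannot vanish in the range from the generating degree up to $pn(E)$. The only difference is one of explicitness --- you justify the existence of $x$ and spell out the ``once zero, always zero'' step, which the paper compresses into the statement $pn(E)=pn(E/zE)-1$ and the remark that $H_{E/zE}(t)\ge 1$ for $\Delta(E/zE)\le t\le pn(E/zE)-1$.
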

\begin{proof} 
Let  $z\in R_1$  be an $E$-regular element.  
Since $E$ is a Cohen-Macaulay module,  $pn(E) = pn(E/zE) - 1$.  
Note  that $\Delta(E/zE) \le \Delta = \Delta(E)$ and  since $\dim E/zE =0$, $H_{E/zE}(t) \ge 1$ for all  $\Delta(E/zE) \le t \le pn(E/zE) -1 = pn(E) $. 
Hence the statements follow from the following equality
$$H_E(n) = H_E(\Delta ) + \sum_{\Delta +1\leq  i \leq n}H_{E/zE}(i).$$
\end{proof}
 
 The following result is a slight improvement of  \cite[Proposition 2.7]{RV2} and the remark after it.

\begin{lem} \label{I6} Let  $M$ be an one-dimensional  Cohen-Macaulay $A$-module such that $IM \subseteq \mfr^b M$.  Then
\begin{itemize}
\item[(i)] $ a_1(G_I(M)) = pn(I,M) - 1 > a_0(G_I(M))$ and $\reg (G_I(M)) = pn(I,M)$,
\item[(ii)]   $H_{I,M}(n) \ge n+b + \ell(H^0_{G_+}(G_I(M))_n)$ for all $  0\le n \le pn(I,M)$,
\item[(iii)] {\rm (\cite[Proposition 2.7(2)]{RV2})}  $pn(I,M) \le e_0(I,M) - b$. 
If the equality holds, then $\ell(\overline{G_I(M)}_n) = n+b$ for all $  0\le n \le pn(I,M)$.
\end{itemize}

\noindent
If,  in addition, $e_0(I,M) \neq e_0(\mfr^b,M)$, then we further have:
\begin{itemize}
\item[(iv)] $H_{I,M}(n) \ge n+b +1+ \ell(H^0_{G_+}(G_I(M))_n)$ for all $  0\le n \le pn(I,M)$,
\item[(v)] {\rm (\cite[Remark (b) after Proposition 2.7]{RV2}) } 
$pn(I,M) \le e_0(I,M) - b - 1$. 
If the equality holds, then $\ell(\overline{G_I(M)}_n) = n+b+1$ for all $  0\le n \le pn(I,M)$.
\end{itemize}
\end{lem}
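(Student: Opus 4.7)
The plan is to abbreviate $G := G_I(M)$ and $\overline{G} := \overline{G_I(M)}$, and to exploit the short exact sequence
\[
0 \to H^0_{G_+}(G) \to G \to \overline{G} \to 0
\]
together with the fact that $\overline{G}$ is a one-dimensional Cohen-Macaulay graded module generated in degree $0$, to which Lemmas \ref{I5} and \ref{I4} apply directly.

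For part (i), I first apply the long exact sequence in local cohomology to the above sequence. Because $H^0_{G_+}(G)$ has finite length, its higher local cohomology vanishes, so $H^i_{G_+}(G) \cong H^i_{G_+}(\overline{G})$ for $i \ge 1$; in particular $a_1(G) = a_1(\overline{G})$. Since $\overline{G}$ is Cohen-Macaulay of dimension $1$, the Grothendieck-Serre formula gives $H_{\overline{G}}(n) - e_0(I,M) = -\ell(H^1_{G_+}(\overline{G})_n)$, whence $a_1(\overline{G}) = pn(\overline{G}) - 1$. To identify $pn(\overline{G})$ with $pn(I,M)$ and to obtain the strict inequality $a_0(G) < pn(I,M) - 1$, I pick an $M$-superficial element $x \in I$, so that $x^* \in G_1$ is filter-regular on $G$ and regular on $\overline{G}$; using $(0 :_G x^*) \subseteq H^0_{G_+}(G)$ together with the exact sequence $0 \to \overline{G}(-1) \xrightarrow{x^*} \overline{G} \to \overline{G}/x^*\overline{G} \to 0$ and a comparison of Hilbert functions, I show that $H^0_{G_+}(G)$ is concentrated in degrees strictly less than $pn(\overline{G}) - 1$. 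The identity $\reg(G) = pn(I,M)$ then follows from $\reg(G) = \max\{a_0(G), a_1(G) + 1\}$.

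For part (ii), the short exact sequence yields $H_{I,M}(n) = H_{\overline{G}}(n) + \ell(H^0_{G_+}(G)_n)$, so it suffices to prove $H_{\overline{G}}(n) \ge n + b$ for $0 \le n \le pn(I,M) = pn(\overline{G})$. Since $\overline{G}$ is generated in degree $0$, $\Delta(\overline{G}) = 0$; Lemma \ref{I5}(ii) gives $H_{\overline{G}}(n) \ge n + \ell(\overline{G}_0)$, and Lemma \ref{I4}(i) yields $\ell(\overline{G}_0) \ge b$. For part (iii), I specialize this at $n = pn(I,M)$, where $H_{\overline{G}}(pn(I,M)) = e_0(I,M)$, to get $pn(I,M) \le e_0(I,M) - b$. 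If equality holds, then Lemma \ref{I5}(i) provides $pn(I,M) + 1$ strictly increasing values $\ell(\overline{G}_0) < \cdots < \ell(\overline{G}_{pn(I,M)}) = e_0(I,M)$ lying in the interval $[b, e_0(I,M)]$ of length $pn(I,M) = e_0(I,M) - b$, forcing $\ell(\overline{G}_n) = n + b$ for each such $n$.

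Parts (iv) and (v) are obtained by the same argument, replacing Lemma \ref{I4}(i) with Lemma \ref{I4}(ii) so that the stronger hypothesis $e_0(I,M) \neq e_0(\mfr^b,M)$ yields $\ell(\overline{G}_0) \ge b + 1$, and shifting the numerics by one. The main obstacle will be the two Step-1 claims $pn(\overline{G}) = pn(I,M)$ and $a_0(G) \le pn(I,M) - 2$; once these are established, parts (ii)--(v) reduce to straightforward bookkeeping with Lemmas \ref{I4} and \ref{I5}.
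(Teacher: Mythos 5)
Your treatment of parts (ii)--(v) coincides with the paper's: the short exact sequence $0 \to H^0_{G_+}(G_I(M)) \to G_I(M) \to \overline{G_I(M)} \to 0$, the bounds $\ell(\overline{G_I(M)}_0) \ge b$ (resp.\ $\ge b+1$) from Lemma \ref{I4}, and the monotonicity statements of Lemma \ref{I5} are exactly the ingredients used there, and your bookkeeping for the equality cases in (iii) and (v) is correct.

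The gap is in part (i), and it is the one you yourself flag as ``the main obstacle.'' Everything reduces to the strict inequality $a_0(G_I(M)) < a_1(G_I(M))$: once that is known, the Grothendieck--Serre formula gives $pn(I,M) = a_1(G_I(M)) + 1 = pn(\overline{G_I(M)})$ and $\reg(G_I(M)) = a_1(G_I(M)) + 1$. The paper does not prove this inequality from scratch; it quotes \cite[Theorem 5.2]{H}. Your proposed substitute --- the inclusion $(0:_{G_I(M)}x^*) \subseteq H^0_{G_+}(G_I(M))$ plus ``a comparison of Hilbert functions'' --- only delivers the non-strict inequality. Indeed, writing $h_n = \ell(H^0_{G_+}(G_I(M))_n)$ and using $H_{I,M}(n) = H_{\overline{G_I(M)}}(n) + h_n \le e_0(I,M)$, one gets $h_n = 0$ for $n \ge pn(\overline{G_I(M)})$ because there $H_{\overline{G_I(M)}}(n) = e_0(I,M)$; but at $n = pn(\overline{G_I(M)}) - 1$ one only has $H_{\overline{G_I(M)}}(n) \le e_0(I,M) - 1$, so the same comparison is consistent with $h_n > 0$. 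Moreover, the inclusion $(0:_{G_I(M)}x^*) \subseteq H^0_{G_+}(G_I(M))$ points the wrong way for bounding $a_0(G_I(M))$, and the sequence $0 \to \overline{G_I(M)}(-1) \to \overline{G_I(M)} \to \overline{G_I(M)}/x^*\overline{G_I(M)} \to 0$ carries no information about $H^0_{G_+}(G_I(M))$ at all. Since the strict inequality is precisely what is exploited later (e.g.\ in Proposition \ref{H1}(i) and in identifying $\reg(G_I(M))$ with $a_1(G_I(M))+1$), you must either give an actual proof of $a_0 < a_1$ in the one-dimensional Cohen--Macaulay case or invoke a result such as \cite[Theorem 5.2]{H}, as the paper does.
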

\begin{proof} (i) By \cite[Theorem 5.2]{H}, $a_0(G_I(M
)) < a_1(G_I(M))$. From the Grothendieck-Serre  formula
\begin{multline*}
    H_{I,M}(n)- H\!P_{I,M}(n) = H_{G_{I,M}}(n) - H\!P_{G_{I,M}}(n)=\\
    =\ell(H^0_{G_+}(G_I(M))_n)-\ell(H^1_{G_+}(G_I(M))_n),
\end{multline*}
it follows that $pn(G_I(M)) = a_1(G_I(M)) + 1$.

\noindent
(ii)  From the short exact sequence
$$ 0 \longrightarrow H^0_{G_+}(G_I(M)) \longrightarrow G_I(M) \longrightarrow \overline{G_I(M)}:=G_I(M)/ H^0_{G_+}(G_I(M))  \longrightarrow 0,$$
we get  
$$H_{I,M}(n) = \ell(\overline{G_I(M)}_n) +  \ell(H^0_{G_+}(G_I(M))_n).$$
From (i) it follows that   
$$pn(I,M) = a_1(G_I(M)) +1 = pn (\overline{G_I(M)}).$$
Note that $\Delta (G_I(M)) =0$, and by Lemma \ref{I4}(i), $\ell(\overline{G_I(M)}_0) \ge b$. 
Since $\overline{G_I(M)}$ is a Cohen-Macaulay module, by Lemma \ref{I5}, we have
\begin{eqnarray} H_{I,M}(n) & =& \ell(H^0_{G_+}(G_I(M))_n) + H_{\overline{G_I(M)}}(n)  \nonumber \\
& \ge & \ell(H^0_{G_+}(G_I(M))_n) + H_{\overline{G_I(M)}}(0) + n \label{EI61}\\
& \ge & \ell(H^0_{G_+}(G_I(M))_n)  + b + n, \label{EI62}
\end{eqnarray}
for all $0\leq n \leq pn(\overline{G_I(M)})  = pn(G_I(M)) = pn(I,M)$.

\noindent
(iii) Since $H_{I,M}(n) \leq e_0(I,M)$ (see, e.g.,  the remark after Lemma 2.1 in \cite{RV2}), from (ii) we immediately get 
$$p \le e_0(I,M) - b - \ell(H^0_{G_+}(G_I(M))_p) \le e_0(I,M) - b,$$
 where  $p := pn(I,M)$. 
 If the equality holds, then from (\ref{EI61}) and (\ref{EI62}) we must have $H_{\overline{G_I(M)}}(0) = b$ and $H_{\overline{G_I(M)}}(p) = p+b$. 
 From Lemma \ref{I5} we then get 
 $$\ell(\overline{G_I(M)}_n) =H_{\overline{G_I(M)}}(n)= n+b$$
 for all $ 0\le n \le p$.
 
 \noindent
 (iv) and (v) If,  in addition, $e_0(I,M) \neq e_0(\mfr^b,M)$, then  using Lemma \ref{I5}(ii), instead of (\ref{EI62}), we get a little bit stronger inequality: 
$$ H_{I,M}(n) \ge \ell(H^0_{G_+}(G_I(M))_n)  + b + 1 +n,$$
which then implies (iv) and (v).
\end{proof}

%%%%%%%%%%%%%%%%%%%%%%%%%%%%%%%%%%%%%%%%%%%%%%%%%%%%%%%%%%%%%%%%%%%%%%%%%%%%%%%%%%%%%%%%%%%%%%%%%%%%%
%%%%%%%%%%%%%%%%%%%%%%%%%%%%%%%%%%%%%%%%%%%%%%%%%%%%%%%%%%%%%%%%%%%%%%%%%%%%%%%%%%%%%%%%%%%%%%%%%%%%%
%%%%%%%%%%%%%%%%%%%%%%%%%%%%%%%%%%%%%%%%%%%%%%%%%%%%%%%%%%%%%%%%%%%%%%%%%%%%%%%%%%%%%%%%%%%%%%%%%%%%%
%%%%%%%%%%%%%%%%%%%%%%%%%%%%%%%%%%%%%%%%%%%%%%%%%%%%%%%%%%%%%%%%%%%%%%%%%%%%%%%%%%%%%%%%%%%%%%%%%%%%%
%%%%%%%%%%%%%%%%%%%%%%%%%%%%%%%%%%%%%%%%%%%%%%%%%%%%%%%%%%%%%%%%%%%%%%%%%%%%%%%%%%%%%%%%%%%%%%%%%%%%%
\medskip
\section{The first Hilbert coefficient of a module} \label{e1Mo}

In this section we always assume that $M$ is a Cohen-Macaulay module over a local ring $(A,\mfr)$ and $I$ is an $\mfr$-primary ideal. 
We start with a slight improvement of \cite[Proposition 2.8 and Proposition 2.10]{RV2}. 
Its proof is also a modification of the one of \cite[Proposition 2.8]{RV2}. 
Note that \cite[Proposition 2.8 and Proposition 2.10]{RV2} is formulated for an arbitrary filtered module (not necessarily Cohen-Macaulay). 

\begin{prop} \label{H1} 
Let  $M$ be a  Cohen-Macaulay module and of dimension $ d \geq 1$. 
Let  $b$ be a positive integer such that $IM \subseteq \mfr^b M.$ 
Then
\begin{equation}\label{EH11} e_1(I,M) \leq  \binom{e_0(I,M) - b +1}{2}+b-\ell(M/IM) .
\end{equation}
If  $d=1$ and the equality in {\rm{(\ref{EH11})}} holds, then we have
\begin{itemize}
\item[(i)] $a_0(G_I(M) )\le 0$,
\item[(ii)]  Either $\reg (G_I(M)) = pn(I,M) = e_0(I,M) -b$ or $e_0(I,M) \in \{b, b  + 1\}$,
\item[(iii)] $H_{I,M}(n) = b+n$ for all $1 \le n \le pn(I,M)-1$.
\end{itemize} 
If $d=1$ and $e_0(I,M) > e_0(\mfr^b, M)$, then
\begin{equation}\label{EH12} e_1(I,M) \leq  \binom{e_0(I,M) - b}{2}+b +1 -\ell(M/IM) .
\end{equation}
If the equality in {\rm{(\ref{EH12})}} holds, then we have
\begin{itemize}
\item[(i')] $a_0(G_I(M) )\le 0$,
\item[(ii')]  Either $\reg (G_I(M)) = pn(I,M) = e_0(I,M) -b -1 $ or $e_0(I,M) \in \{b+1, b  + 2\}$,
\item[(iii')] $H_{I,M}(n) = n+b+1$ for all $1 \le n \le pn(I,M)-1$.
\end{itemize} 
\end{prop}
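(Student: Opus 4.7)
The plan is to reduce the general statement to the case $d=1$ and then bound $e_1(I,M)$ by summing the differences $e_0-H_{I,M}(n)$ against the sharp lower bounds of Lemma~\ref{I6}.

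I would first reduce to dimension one. For $d\ge 2$, since $M$ is Cohen-Macaulay and the residue field is infinite, pick an $M$-superficial sequence $x_1,\dots,x_{d-1}\in I$; each $x_i$ is regular on the previous quotient, so by Lemma~\ref{I2}(ii)-(iii) the invariants $e_0$ and $e_1$ are preserved when passing to $M':=M/(x_1,\dots,x_{d-1})M$. The colength $\ell(M/IM)$ is preserved because each $x_i\in I$, the containment $IM\subseteq\mfr^bM$ descends to $IM'\subseteq\mfr^bM'$, and $M'$ is Cohen-Macaulay of dimension one. Hence it suffices to prove (\ref{EH11}) for $d=1$, while the equality conditions are already stated only in that case.

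Assume $d=1$ and set $p:=pn(I,M)$, $e_0:=e_0(I,M)$, $c:=e_0-b$. Differentiating $Q_{I,M}(z)=(1-z)P_{I,M}(z)$ at $z=1$ and using that $H_{I,M}(n)=e_0$ for $n\ge p$ gives the standard identity
\[
e_1(I,M)=\sum_{n=0}^{p-1}\bigl(e_0-H_{I,M}(n)\bigr).
\]
I would split off the $n=0$ term (where $H_{I,M}(0)=\ell(M/IM)$) and, for $1\le n\le p-1$, substitute the lower bound $H_{I,M}(n)\ge n+b+\ell(H^0_{G_+}(G_I(M))_n)$ from Lemma~\ref{I6}(ii). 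Using also $p\le c$ from Lemma~\ref{I6}(iii), this yields
\[
e_1(I,M)\le pc-\binom{p}{2}+b-\ell(M/IM)-\sum_{n=1}^{p-1}\ell\bigl(H^0_{G_+}(G_I(M))_n\bigr).
\]
The integer function $p\mapsto pc-\binom{p}{2}$ on $\{0,1,\dots,c\}$ is maximized at $p=c$ with value $\binom{c+1}{2}$, yielding (\ref{EH11}).

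For the equality analysis, equality in (\ref{EH11}) forces simultaneously the vanishing $\ell(H^0_{G_+}(G_I(M))_n)=0$ for $1\le n\le p-1$, the equalities $H_{I,M}(n)=n+b$ for $1\le n\le p-1$ (which is (iii)), and $pc-\binom{p}{2}=\binom{c+1}{2}$. The last identity factors as $(c-p)(c-p+1)=0$, so $p=c$ or $p=c+1$; since $p\le c$ this gives $p=c$, that is $pn(I,M)=e_0-b$, unless the sum was vacuous, i.e.\ $p\le 1$. A brief case check of $p=0$ and $p=1$, using $\ell(M/IM)\le e_0$ and $e_1=\binom{c+1}{2}+b-\ell(M/IM)$, then forces $c\in\{0,1\}$ in the vacuous case, i.e.\ $e_0\in\{b,b+1\}$; this together with $\reg(G_I(M))=pn(I,M)$ from Lemma~\ref{I6}(i) gives (ii). Assertion (i) follows by combining the forced vanishing $\ell(H^0_{G_+}(G_I(M))_n)=0$ for $1\le n\le p-1$ with the inequality $a_0(G_I(M))\le p-2$ from Lemma~\ref{I6}(i). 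The tighter inequality (\ref{EH12}) and its companions (i'), (ii'), (iii') are proved by exactly the same scheme, substituting Lemma~\ref{I6}(iv)-(v) for Lemma~\ref{I6}(ii)-(iii), which replaces $n+b$ by $n+b+1$ and $c$ by $c-1$ throughout. The main obstacle is the careful bookkeeping in the equality analysis, especially isolating the degenerate $p\le 1$ branch that produces the alternative $e_0\in\{b,b+1\}$ in (ii).
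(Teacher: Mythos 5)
Your proof is correct and takes essentially the same route as the paper's: reduce to $d=1$ by a superficial sequence, write $e_1=\sum_n\bigl(e_0-H_{I,M}(n)\bigr)$, split off the $n=0$ term, and insert the lower bounds of Lemma \ref{I6}(ii)--(v), with the equality analysis forcing the same three conditions (vanishing of $H^0_{G_+}$ in the relevant degrees, $H_{I,M}(n)=n+b$, and $p=e_0-b$ outside the degenerate small cases). The only cosmetic difference is that you maximize the quadratic $pc-\binom{p}{2}$ in $p$, where the paper instead extends the sum to the full range $1\le i\le e_0-b-1$.
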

\begin{proof} For simplicity, set $e_0:=e_0(I,M),\ e_1 := e_1(I,M)$ and $p := pn(I,M) $. By  standard technique (using Lemma \ref{I2}) we may assume that  $d = 1$.    We have 
$$e_1 = \sum_{i=0}^{p-1} (e_0- H_{I,M}(i)) = e_0- H_{I,M}(0)+\sum_{i=1}^{p-1} (e_0-H_{I,M}(i)).$$
Using Lemma \ref{I6}(ii) and (iii), we get
\begin{eqnarray}\nonumber 
e_1 &\le& e_0- H_{I,M}(0)+\sum_{i=1}^{p-1} (e_0-i-b - \ell(H^0_{G_+}(G_I(M))_i) \\
& \le & \sum_{i=1}^{e_0-b-1}(e_0-i-b - \ell(H^0_{G_+}(G_I(M))_i )  + e_0 - \ell(M/IM) \label{EH13} \\
& \le & \sum_{i=1}^{e_0-b-1}(e_0-i-b )  + e_0 - \ell(M/IM) \label{EH14}\\
&= & \binom{e_0-b+1}{2} + b - \ell(M/IM). \nonumber
\end{eqnarray}
If $e_1 = \binom{e_0-b+1}{2} + b - \ell(M/IM)$, then from (\ref{EH13}) and (\ref{EH14}) we must have:

\medskip
\begin{itemize}
\item[(a)] $H^0_{G_+}(G_I(M))_i  =0$ for all $1\le i \le e_0-b-1$,
\item[(b)] $H_{I,M}(i) = b+i$ for all $1\le i \le e_0-b-1$,
\item[(c)] $p = e_0-b$ if $e_0-b\ge 2$.
\end{itemize} 

\medskip
\noindent
Since $p\le e_0-b$ by Lemma \ref{I6}(iii), (b) implies (iii). By Lemma \ref{I6}(i), $p-1 > a_0(G_I(M))$. Hence (a) implies (i). Since $a_1(G_I(M)) > a_0(G_I(M))$ (by Lemma \ref{I6}(i)), $\reg(G_I(M)) = a_1(G_I(M)) +1$.  Using again Lemma \ref{I6}(i), we get $\reg(G_I(M)) = p$. Then (c) implies (ii).

Finally, if $e_0(I,M) > e_0(\mfr^b, M)$, then by Lemma \ref{I6}(v), $p\le e_0-b-1$. 
Hence as above, we get
\begin{eqnarray}
e_1 & \le & \sum_{i=1}^{e_0-b-2}(e_0-i-b-1 - \ell(H^0_{G_+}(G_I(M))_i )  + e_0 - \ell(M/IM) \label{EH15} \\
& \le & \sum_{i=1}^{e_0-b-2}(e_0-i-b-1 )  + e_0 - \ell(M/IM) \label{EH16}\\
&= & \binom{e_0-b}{2} + b + 1- \ell(M/IM) . \nonumber
\end{eqnarray}
The proof of (i'), (ii') and (iii') is similar to that of (i), (ii) and (iii), where (\ref{EH15}) and (\ref{EH16}) are used.
\end{proof}

\medskip
Assume that $A$ is a Cohen-Macaulay ring. Elias \cite[Proposition 2.5]{E1} showed that if $I \subseteq \mfr^b$ for some $b\ge 2$, then under an additional condition we have:
$$e_1(I) \le \binom{e_0(I)-b}{2}.$$
Using integral closures of an ideal, K. Hanumanthu and C. Huneke were able to remove that additional condition (see \cite[Corollary 3.7]{HH}). We can now extend this result to the case of modules.

\begin{prop} \label{H2} 
Assume that $M$ is a Cohen-Macaulay $A$-module of positive dimension and $I$ is an $\mfr$-primary ideal of $(A,\mfr)$ such that $IM \subseteq \mfr^b M$ for some $b\ge 2$. 
Then
$$e_1(I,M) \le \binom{e_0(I,M) - b}{2}.$$
\end{prop}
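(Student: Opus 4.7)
The plan is to reduce to dimension one and then split into two cases according to whether the multiplicities $e_0(I,M)$ and $e_0(\mfr^b,M)$ coincide. The containment $IM \subseteq \mfr^b M$ iterates to $I^nM \subseteq \mfr^{bn}M$ for all $n$, hence $\ell(M/I^nM) \geq \ell(M/\mfr^{bn}M)$ and $e_0(I,M) \geq e_0(\mfr^b,M)$, so only the cases $e_0(I,M) > e_0(\mfr^b,M)$ and $e_0(I,M) = e_0(\mfr^b,M)$ occur. For the reduction I would pick $x \in I$ simultaneously $M$-superficial for $I$ and for $\mfr$ (available since $A/\mfr$ is infinite); by Lemma \ref{I2}(ii)--(iii) together with $0:x = 0$ (as $M$ is Cohen-Macaulay), the coefficients $e_0$ and $e_1$ for both $I$ and $\mfr^b$ on $M$ are preserved on passing to $M/xM$, while $IM \subseteq \mfr^b M$ descends since $I(M/xM) \subseteq (\mfr^b M + xM)/xM = \mfr^b(M/xM)$. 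Iterating leaves us with a one-dimensional Cohen-Macaulay module.

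In the first case, $e_0(I,M) > e_0(\mfr^b,M)$, Lemma \ref{I4}(ii) gives $\ell(\overline{G_I(M)}_0) \geq b+1$; since $\overline{G_I(M)}_0$ is a quotient of $G_I(M)_0 = M/IM$, this yields $\ell(M/IM) \geq b+1$. The second bound of Proposition \ref{H1} (inequality (\ref{EH12}), which is available precisely under this hypothesis) then gives
$$e_1(I,M) \leq \binom{e_0(I,M)-b}{2} + b + 1 - \ell(M/IM) \leq \binom{e_0(I,M)-b}{2}.$$

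In the second case, $e_0(I,M) = e_0(\mfr^b,M)$, Proposition \ref{H1} alone is not sharp enough, so I would reroute through $\mfr^b$ itself. From $I^nM \subseteq (\mfr^b)^nM$ together with equal leading coefficients of the two Hilbert-Samuel polynomials one deduces $e_1(I,M) \leq e_1(\mfr^b,M)$. Matching the two asymptotic expressions for $\ell(M/\mfr^{bn}M)$ read off from the Hilbert-Samuel polynomials of $\mfr$ and of $\mfr^b$ separately (and using $e_0(\mfr^b,M) = b \cdot e_0(\mfr,M)$) yields the classical identity $e_1(\mfr^b,M) = e_1(\mfr,M)$. Finally Proposition \ref{H1} applied with the ideal $\mfr$ and $b=1$ gives $e_1(\mfr,M) \leq \binom{e_0(\mfr,M)}{2}$, and for $e := e_0(\mfr,M) \geq 1$ and $b \geq 2$ the elementary inequality $b(e-1) \geq e$ (trivial when $e=1$, and for $e \geq 2$ equivalent to $(b-1)e \geq b$) gives $\binom{e}{2} \leq \binom{b(e-1)}{2}$. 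Chaining everything together,
$$e_1(I,M) \leq e_1(\mfr,M) \leq \binom{e_0(\mfr,M)}{2} \leq \binom{b(e_0(\mfr,M)-1)}{2} = \binom{e_0(I,M)-b}{2}.$$

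The main obstacle is the second case: the bound from Proposition \ref{H1} carries the extra positive term $e_0(I,M) - \ell(M/IM)$, which can be strictly positive (for instance when $I = \mfr^b$). The key insight that overcomes this is that the equality $e_0(I,M) = e_0(\mfr^b,M)$ lets one replace $I$ by the much more structured ideal $\mfr^b$ without losing information on $e_1$, and then the well-known invariance $e_1(\mfr^b,M) = e_1(\mfr,M)$ pushes the problem back to the already controlled case $b=1$.
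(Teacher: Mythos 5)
Your proposal is correct and follows essentially the same route as the paper: reduce to dimension one, use Proposition \ref{H1}(\ref{EH12}) together with $\ell(M/IM)\ge b+1$ when $e_0(I,M)>e_0(\mfr^b,M)$, and otherwise pass to $\mfr^b$ via $e_1(I,M)\le e_1(\mfr^b,M)=e_1(\mfr,M)\le\binom{e_0(\mfr,M)}{2}$. The only blemish is the parenthetical claim that $b(e-1)\ge e$ is ``trivial when $e=1$'' --- it is in fact false there --- but the needed conclusion $\binom{e}{2}\le\binom{b(e-1)}{2}$ still holds since both sides vanish, which is exactly why the paper disposes of the case $e_0(\mfr,M)=1$ separately before using that inequality.
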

\begin{proof} Using standard technique  we may assume that  $d = 1$. 
If $e_0(I,M) > e_0(\mfr^b,M)$, then the statement follows from Proposition \ref{H1} (\ref{EH12}), since $$\ell(M/IM) > \ell(M/\mfr^b M) \ge b.$$

Assume now that $e_0(I,M) =  e_0(\mfr^b,M)$.  
For $n\gg 0$, we have

$$\begin{array}{ll} e_0(I,M) (n+1) - e_1(I,M) = 
\ell (M/I^{n+1}M) & \ge  \ell (M/(\mfr^b)^{n+1}M) \\ \\
& = e_0(\mfr^b,M)(n+1) - e_1(\mfr^b,M).\end{array}$$
Hence $e_1(I,M) \le e_1(\mfr^b,M)$. Note that for $n\gg 0$,
$$\ell (M/(\mfr^b)^{n+1}M) = \ell (M/(\mfr^{(n+1)b}M) = e_0(\mfr,M)(n+1)b - e_1(\mfr,M).$$
This implies $ e_0(\mfr^b,M)= be_0(\mfr,M)$ and $ e_1(\mfr^b,M)= e_1(\mfr,M)$. 
Applying  Proposition \ref{H1} (\ref{EH11}) to the case $b=1$ we get 
$$e_1(\mfr,M) \le \binom{e_0(\mfr,M)}{2}.$$ 
(Of course, this inequality is known in \cite{K}.) 
If $e_0(\mfr,M) =1$, then the above inequality give $e_1(I,M) \le e_1(\mfr,M)=0$ and the statement trivially holds. Assume $e_0:= e_0(\mfr,M) \ge 2$. 
Since $b\ge 2$, we have $be_0-b \ge e_0$. Hence 
$$e_1(I,M) \le e_1(\mfr^b,M)  = e_1(\mfr,M) \le \binom{e_0}{2} \le \binom{be_0 - b}{2} = \binom{e_0(I,M)-b}{2}.$$
\end{proof}

\begin{rem} \label{regular}
Let us examine  when the bound in  Proposition \ref{H1} (\ref{EH11}) holds in the case $\dim M= 1$. 
Let $b$ be the largest positive integer  such that $IM \subseteq \mfr^bM$. 
Note that 
$b\le \ell(M/\mfr^bM) \le \ell(M/IM)\le e_0(I,M)$, and $e_0(I,M) \ge e_0(\mfr^b, M) = b e_0(\mfr,M)$. 
If $e_0(I,M) = b$, then $\ell(M/\mfr M) =1, \ e_0(\mfr,M) =1$, and from the inequality 
 Proposition \ref{H1} (\ref{EH11}) we get $e_1(I,M) \le 0$, whence $e_1(I,M) =0$. 
Then  Proposition \ref{H1}(\ref{EH11})  becomes an equality. 
Replacing $A$ by $A/\text{Ann}(M)$, one can now conclude  that $e_0(I,M) = b$ if and only if $M \cong A$,  $A$ is a regular ring and $I = (x^b)$, where $\mfr = (x)$. 
Hence we can exclude this case in further investigation.
\end{rem}

\begin{lem} \label{H3a} 
Let  $M$ be a  Cohen-Macaulay module of positive dimension $d$ and $I$ an $\mfr$-primary ideal. 
Let $b$ be the largest positive integer  such that $IM \subseteq \mfr^bM$. 
Assume that $e_0(I,M) > b$ and 
$$e_1(I,M) = \binom{e_0(I,M) -b +1}{2} + b - \ell(M/IM).$$
Then $b=1$.
\end{lem}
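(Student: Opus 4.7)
The plan is to argue by contradiction, assuming $b \ge 2$.

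The key combination is Proposition~\ref{H2}, which gives $e_1(I,M) \le \binom{e_0(I,M)-b}{2}$ when $b \ge 2$, together with the hypothesized equality $e_1(I,M) = \binom{e_0(I,M)-b+1}{2} + b - \ell(M/IM)$. Using Pascal's identity $\binom{e_0-b+1}{2} - \binom{e_0-b}{2} = e_0-b$, these two force $\ell(M/IM) \ge e_0(I,M)$. Since $M$ is Cohen-Macaulay and any minimal reduction $J \subseteq I$ satisfies $\ell(M/JM) = e_0(J,M) = e_0(I,M)$ while $JM \subseteq IM$, the reverse inequality $\ell(M/IM) \le e_0(I,M)$ always holds. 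Hence $\ell(M/IM) = e_0(I,M)$ and $IM = JM$.

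A short induction (using $I(IM) = I(JM) = J(IM)$) gives $I^nM = J^nM$ for all $n \ge 1$, so $e_1(I,M) = e_1(J,M)$; and since $J$ is a parameter ideal in a Cohen-Macaulay module, $G_J(M)$ is a polynomial module over $M/JM$, whence $e_i(J,M) = 0$ for all $i \ge 1$. Thus $e_1(I,M) = 0$. Plugging $e_1 = 0$ and $\ell(M/IM) = e_0$ into the equality in (\ref{EH11}) reduces it to $\binom{e_0-b+1}{2} = e_0-b$, which together with $e_0-b \ge 1$ forces $e_0(I,M) = b+1$.

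To close, use $I \subseteq \mfr^b$ to get $e_0(I,M) \ge e_0(\mfr^b,M) = b^d e_0(\mfr,M)$, so $b+1 \ge b^d e_0(\mfr,M)$. For $d \ge 2$ this already fails, as $b^d \ge b^2 > b+1$ when $b \ge 2$. For $d = 1$, the inequality forces $e_0(\mfr,M) = 1$, which makes $M$ cyclic and $A/\text{Ann}(M)$ a DVR. Under this identification, $IM = JM$ becomes a principal ideal $(x')$ of valuation $b$ in the DVR, so necessarily $(x') = (\mfr A/\text{Ann}(M))^b$, giving $IM = \mfr^b M$ and hence $e_0(I,M) = e_0(\mfr^b,M) = b e_0(\mfr,M) = b$, contradicting $e_0(I,M) = b+1$. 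The main subtlety lies in this final $d=1$ step: one must extract the DVR structure and use it to pin down $I$ as exactly $\mfr^b$.
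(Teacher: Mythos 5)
Your proof is correct and follows essentially the same route as the paper: combine the hypothesized equality with Proposition~\ref{H2} to force $\ell(M/IM)=e_0(I,M)$, hence $IM=JM$ for a parameter reduction $J$, hence $e_1(I,M)=0$ and $e_0(I,M)=b+1$, and finally contradict this via $e_0(I,M)\ge e_0(\mfr^b,M)$ together with the DVR structure coming from $e_0(\mfr,M)=1$. The only (harmless) difference is organizational: the paper reduces to $d=1$ at the outset by a superficial sequence, whereas you run the argument in dimension $d$ and dispose of $d\ge 2$ directly from $b^d e_0(\mfr,M)>b+1$.
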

  \begin{proof}  Assume that $b\ge 2$. 
 First assume that $d=1$. 
 We have 
 
$$ \begin{array}{ll} e_1(I,M) & =  \binom{e_0(I,M) - b +1}{2}+b-\ell(M/IM)  \\ \\
 & =  \binom{e_0(I,M) - b}{2}+ e_0(I,M) -\ell(M/IM).
 \end{array}$$
 
\bigskip
\noindent
 Since $e_0(I,M) \ge \ell(M/IM)$, the above equality together with the inequality in Proposition \ref{H2} imply that  
 
 \begin{equation}\label{EH3a1}
 e_1(I,M) = \binom{e_0(I,M) - b}{2},
\end{equation}
and $e_0(I,M) = \ell(M/IM)$. 
Since $M$ is an one-dimensional Cohen-Macaulay module, $e_0(I,M) = \ell(M/xM)$ for some $x\in I$. 
This implies $IM = xM$, i.e. we can assume that $I$ is a parameter ideal. 
Then $e_1(I,M) = 0$,  and by (\ref{EH3a1}), $e_0(I,M) \le b+1$. 
By the assumption, we get $e_0(I,M) = b+1$.

Since $b+1 = e_0(I,M) \ge e_0(\mfr^b,M) = b e_0(\mfr, M)$ and $b\ge 2$, we  can conclude  that $e_0(\mfr,M) =1$. Let $y\in \mfr$ such that $e_0(\mfr, M) = \ell(M/ yM)$. 
Note that $ \ell(M/ yM) \ge \ell(M/\mfr M) = \mu(M)$. Hence,  then we must have $yM = \mfr M$ and $M$ is generated by one element, say $M = Au$.  
Replacing $A$ by $A/\text{Ann}(M)$, we may assume that $M= A$. 
Then $A$ is a regular ring, $\mfr = (y)$, see Remark \ref{regular}.
Since $x \in \mfr^b$ and $b$ is the largest  number satisfying this property, it implies that  $x= r y^b$ for some unit $r$. But then $e_0(I,M) = e_0(x,A) = b$, a contradiction. 
Hence, the assumption $b\ge 2$ is wrong and then $b=1$.

Now assume that $d \ge 2$. Let $x_1,...,x_{d-1}$ be an $M$-superficial sequence for $I$. 
Let $N= M/(x_1,...,x_{d-1})M$. 
Then $\dim N=1$ and 
$$e_1(I,N) = \binom{e_0(I,N) -b +1}{2} + b - \ell(N/IN).$$
Since  $IN \subseteq \mfr^b N$,  we must have $b=1$.
\end{proof}

Below are some characterizations for  the equality in (\ref{EH11}).
 
 \begin{prop} \label{H3}  
 Let  $M$ be an one-dimensional  Cohen-Macaulay $A$-module  and $I$ an $\mfr$-primary ideal. 
 Let $b$ be the largest positive integer  such that $IM \subseteq \mfr^bM$. 
 Assume that $e_0(I,M) \ge b+2$. 
 Then the following conditions are equivalent:
 \begin{itemize}
\item[(i)]  $e_1(I,M) = \binom{e_0(I,M)  - b+1}{2} + b - \ell(M/IM),$
\item[(ii)]  $P_{I,M}(z) = \frac{\ell(M/IM)+(b+1-\ell(M/IM))z+\sum_{i=2}^{e_0(I,M) -b}z^i}{1-z},$
\item[(iii)]  $a_0(G_I(M)) \le 0$ and $\reg(G_I(M)) = e_0(I,M) -b$,
 \item[(iv)] $\reg(G_I(M)) = \binom{e_0(I,M) -b +2}{2} +b  -e_1(I,M) - \ell(M/IM) -1$.
\end{itemize}
 If one of the above  conditions is satisfied, then $b=1$ and  $e_0(I,M) = e_0(\mfr, M)$.
\end{prop}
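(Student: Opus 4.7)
The plan is to establish the cycle (i) $\Rightarrow$ (iii) $\Rightarrow$ (ii) $\Rightarrow$ (i), then identify (iv) as an algebraic reformulation equivalent to (i) combined with $\reg(G_I(M)) = e_0(I,M) - b$, and finally derive the assertions $b = 1$ and $e_0(I,M) = e_0(\mfr,M)$ from earlier results in the paper.

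For (i) $\Rightarrow$ (iii), I would simply invoke Proposition \ref{H1}: equality in (\ref{EH11}) already forces $a_0(G_I(M)) \le 0$ together with the dichotomy $\reg(G_I(M)) = pn(I,M) = e_0(I,M) - b$ or $e_0(I,M) \in \{b, b+1\}$, and the hypothesis $e_0(I,M) \ge b+2$ excludes the latter. For (iii) $\Rightarrow$ (ii), I apply Lemma \ref{I6}(iii): since $pn(I,M) = e_0(I,M) - b$, one has $\ell(\overline{G_I(M)}_n) = n + b$ for every $0 \le n \le e_0(I,M) - b$. Combined with $H^0_{G_+}(G_I(M))_n = 0$ for $n \ge 1$ coming from $a_0(G_I(M)) \le 0$, this pins down the full Hilbert function: $H_{I,M}(0) = \ell(M/IM)$, $H_{I,M}(n) = n + b$ for $1 \le n \le e_0(I,M) - b - 1$, and $H_{I,M}(n) = e_0(I,M)$ for $n \ge e_0(I,M) - b$. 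A telescoping computation of $(1 - z) P_{I,M}(z)$ then reproduces the numerator claimed in (ii). The implication (ii) $\Rightarrow$ (i) amounts to evaluating $Q_{I,M}'(1)$ on the explicit numerator.

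For the equivalence involving (iv), I would exploit the Pascal-type identity $\binom{e_0(I,M) - b + 2}{2} - \binom{e_0(I,M) - b + 1}{2} = e_0(I,M) - b + 1$. Substituting the value of $e_1(I,M)$ from (i) into the right-hand side of (iv) collapses it to $e_0(I,M) - b$, which is $\reg(G_I(M))$ by (iii); thus (i) plus (iii) yield (iv). For the converse, given (iv) I rewrite $e_1(I,M) = \binom{e_0(I,M) - b + 1}{2} + b - \ell(M/IM) + (e_0(I,M) - b - p)$, where $p := \reg(G_I(M)) = pn(I,M)$ by Lemma \ref{I6}(i). The upper bound $e_1(I,M) \le \binom{e_0(I,M) - b + 1}{2} + b - \ell(M/IM)$ from Proposition \ref{H1} together with $p \le e_0(I,M) - b$ from Lemma \ref{I6}(iii) must then force $p = e_0(I,M) - b$ and equality in the $e_1$-bound simultaneously, yielding (i), and hence (iii) via Proposition \ref{H1}.

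For the final assertions, Lemma \ref{H3a} applied to (i) forces $b = 1$. To see $e_0(I,M) = e_0(\mfr, M)$, suppose $e_0(I,M) > e_0(\mfr, M) = e_0(\mfr^b, M)$. Then Proposition \ref{H1}(\ref{EH12}) would give $e_1(I,M) \le \binom{e_0(I,M) - 1}{2} + 2 - \ell(M/IM)$, whereas (i) (with $b = 1$) states $e_1(I,M) = \binom{e_0(I,M)}{2} + 1 - \ell(M/IM)$; since $e_0(I,M) \ge 3$, the latter exceeds the former by $e_0(I,M) - 2 \ge 1$, a contradiction. Combined with $e_0(I,M) \ge e_0(\mfr, M)$ (a direct consequence of $I \subseteq \mfr$), equality follows. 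The main obstacle I anticipate is the (iv) $\Rightarrow$ (i) step, where a single scalar equation must be unpacked into equality in two distinct prior inequalities at once; once that is set up, the rest reduces to bookkeeping with Proposition \ref{H1} and Lemma \ref{I6}.
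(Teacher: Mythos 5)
Your proposal is correct and follows essentially the same route as the paper: (i)$\Leftrightarrow$(iii)$\Leftrightarrow$(ii) via Proposition \ref{H1} and the equality case of Lemma \ref{I6}(iii), the (iv)-equivalence via the Pascal identity together with the two upper bounds $e_1(I,M)\le\binom{e_0(I,M)-b+1}{2}+b-\ell(M/IM)$ and $\reg(G_I(M))=pn(I,M)\le e_0(I,M)-b$, and the final claims via Lemma \ref{H3a} and inequality (\ref{EH12}). The only cosmetic difference is that you derive $\reg(G_I(M))\le e_0(I,M)-b$ from Lemma \ref{I6}(i),(iii) rather than citing \cite[Proposition 2.1]{D} as the paper does, which is equally valid here.
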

\begin{proof} 
For simplicity, in this proof we set $e_0:=e_0(I,M),\ e_1:=e_1(I,M)$ and $p:=pn(I,M)$. 

\noindent
(ii) $\Longrightarrow$ (i) is immediate from (\ref{EP1}).

\noindent
(i) $\Longrightarrow$ (ii)  Assume that  $e_1 = \binom{e_0 -b +1}{2} +1  - \ell(M/IM)$.  
Since $e_0\ge b+2$, by Proposition \ref{H1}(ii) and (iii), $p = e_0-b$, $H_{I,M}(n) = i+b$ for all $1 \le i \le e_0-b-1$ and $H_{I,M}(n) = e_0$ for all $n\ge e_0-b$. 
Substituting these values into the definition  (\ref{ser}) of the Hilbert series we then get (ii).

\noindent
(i) $\Longrightarrow$ (iii) By Proposition \ref{H1}(i), $a_0(G_I(M)) \le 0$. 
Since $e_0\ge b+2 $, by Proposition \ref{H1}(ii),  $\reg(G_I(M)) =  e_0 -b$.

\noindent
(iii) $\Longrightarrow$ (ii) By Lemma \ref{I6}(i), we have $p = \reg(G_I(M)) = e_0- b$. 
Since $a_0(G_I(M))\le 0$, 
$$\ell(I^tM/I^{t+1}M) = H_{G_I(M)}(t) = H_{\overline{G_I(M)}}(t) \ \text{for all}\ t\ge 1.$$
On the other hand, by Lemma \ref{I6}(iii), 
$$H_{ \overline{G_I(M)}}(t) =
\begin{cases}
t+b& \text{if} \  0\le  t \leq  e_0 - b-1 ,\\
e_0  & \text{if} \ t \ge  e_0- b .
\end{cases}$$

\noindent
Hence
$$\begin{array}{ll}
P_{I,M}(z)  & = \ell (M/IM) + \sum_{t=1}^{e_0-b-1} (t+b) z^t + \sum_{t\ge e_0-b} e_0z^t \\ \\
& = \frac{\ell(M/IM)+(b+1 -\ell(M/IM))z+\sum_{i=2}^{e_0 -b}z^i}{1-z}.
\end{array}$$

\noindent
(i) $\Longrightarrow$ (iv) Using (i) $\Leftrightarrow$ (iii), we have
$$\begin{array}{ll} 
\reg(G_I(M)) & = e_0 - b\\ \\
& = \binom{e_0-b+2}{2} - \binom{e_0-b +1}{2} +  e_1 + \ell(M/IM) - b -  e_1 - \ell(M/IM) + b - 1 \\ \\
& = \binom{e_0-b+2}{2}+b  -  e_1 - \ell(M/IM) -1.
\end{array}$$

\noindent
(iv) $\Longrightarrow$ (i) By \cite[Proposition 2.1]{D}, $\reg(G_I(M)) \le e_0 -b$. Therefore 
$$\begin{array}{ll}
\reg(G_I(M)) &\le e_0 - b\\ \\
&  = \binom{e_0-b+2}{2} - \binom{e_0-b +1}{2} +  e_1 + \ell(M/IM) - b -  e_1 - \ell(M/IM) + b - 1 \\ \\
&\le \binom{e_0-b+2}{2}+b  -  e_1 - \ell(M/IM) -1
 \ \ \text{(by Proposition \ref{H1})}.
\end{array}$$
By virtue of (iv), this implies  $e_1 = \binom{e_0}{2} +  1-  \ell(M/IM)$.

Finally assume (i). 
By Lemma \ref{H3a}, $b=1$. Further,  since $e_0-1 \ge 2$, we have   
$\binom{e_0-1}{2} +1 \le \binom{e_0}{2}$. 
Hence,  if $e_0 \neq e_0(\mfr,M)$, by virtue of Proposition \ref{H1}(\ref{EH12}), we cannot have (i), a contradiction.
\end{proof}
\begin{exm} \label{H4} 
Let $A= k[[t^2,t^3]]$ and $M= \mfr$. 
Then $e_0(\mfr,M) = e_0(\mfr, A) = 2$, while $e_1(\mfr, A) =1$ and $e_1(\mfr, M) = 0$. 
Hence, the condition (i) of Proposition \ref{H3} is satisfied for both pairs $(\mfr, A)$ and $(\mfr,M)$.
Note that $\ell(\mfr^n/ \mfr^{n+1}) = 2 $ for all $n\ge 1$. 
So, $pn(\mfr, A) = 1$ and $pn(\mfr, M) = 0$. 
The ring $G_{\mfr}(A)$ and the module $G_I(M)$ are Cohen-Macaulay, but $\reg(G_{\mfr}(A) )= 1 =  e_0(\mfr, A) -1$, while $\reg (G_{\mfr}(M) )= 0 < e_0(\mfr,M) -1 = 1$.  
This shows that no of the conditions (ii), (iii) and  (iv) in Proposition \ref{H3}  holds.  
So the condition $e_0(I,M) \ge b+2$ in  Proposition \ref{H3} cannot be omitted.
\end{exm}

Using (iv) and (v) of Lemma \ref{I6} and  (i'), (ii') and (iii') of Proposition \ref{H1}, similar arguments of the proof  of Proposition \ref{H3} give:

\medskip
 \begin{prop} \label{H5}  
 Let  $M$ be an one-dimensional  Cohen-Macaulay $A$-module  and $I$ an $\mfr$-primary ideal such that $I \subseteq \mfr^b$, $ e_0(I,M) > e_0(\mfr^b, M)$ and $ e_0(I,M) \ge b+3$, where $b$ is a positive integer. 
 Then the following conditions are equivalent:
   \begin{itemize}
 \item[(i)]  $e_1 = \binom{e_0(I,M) -b}{2} +b +1  - \ell(M/IM)$,
 \item[(ii)]  $P_{I,M}(z) =\frac{\ell(M/IM) + (b+2 -\ell(M/IM))z+\sum_{i=2}^{e_0(I,M) - b-1}z^i}{1-z}$,
 \item[(iii)]  $a_0(G_I(M)) \le 0$ and $\reg(G_I(M)) = e_0(I,M) -b-1$,
  \item[(iv)] $\reg(G_I(M)) = \binom{e_0(I,M) -b +1}{2} +b  -e_1(I,M) - \ell(M/IM)$.
    \end{itemize}
\end{prop}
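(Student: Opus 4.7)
The plan is to mirror the structure of the proof of Proposition \ref{H3} verbatim, establishing the cycle of implications (ii) $\Rightarrow$ (i) $\Rightarrow$ (ii), (i) $\Rightarrow$ (iii) $\Rightarrow$ (ii), and (i) $\Leftrightarrow$ (iv), but invoking the primed versions (i'), (ii'), (iii') of Proposition \ref{H1} and parts (iv), (v) of Lemma \ref{I6}, which are the analogues available under the extra hypothesis $e_0(I,M) > e_0(\mfr^b, M)$. Throughout, set $e_0:=e_0(I,M)$, $e_1:=e_1(I,M)$, $p:=pn(I,M)$.

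First, (ii) $\Rightarrow$ (i) is a direct computation from (\ref{EP1}): differentiating the numerator of the displayed rational function at $z=1$ recovers exactly the stated value of $e_1$. For (i) $\Rightarrow$ (ii), the hypothesis $e_0 \ge b+3$ (so $e_0-b-1 \ge 2$) together with Proposition \ref{H1}(ii') and (iii') forces $p = e_0-b-1$, $H_{I,M}(n)=n+b+1$ for $1 \le n \le p-1$, and $H_{I,M}(n)=e_0$ for $n\ge p$; substituting these values into the definition (\ref{ser}) of the Hilbert series yields the displayed rational expression in (ii). For (i) $\Rightarrow$ (iii), Proposition \ref{H1}(i') gives $a_0(G_I(M))\le 0$, and Proposition \ref{H1}(ii') together with $e_0\ge b+3$ gives $\reg(G_I(M))=p=e_0-b-1$. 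For (iii) $\Rightarrow$ (ii), Lemma \ref{I6}(i) yields $p=\reg(G_I(M))=e_0-b-1$; the hypothesis $a_0(G_I(M))\le 0$ shows $H_{G_I(M)}(t)=H_{\overline{G_I(M)}}(t)$ for $t\ge 1$; and Lemma \ref{I6}(v) then forces $H_{\overline{G_I(M)}}(t)=t+b+1$ for $0\le t\le e_0-b-1$, after which summing yields the claimed Hilbert series.

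For the equivalence with (iv), note that the equivalence (i) $\Leftrightarrow$ (iii) already established gives in (i) the identity $\reg(G_I(M))=e_0-b-1$. A straightforward algebraic manipulation
\[
e_0-b-1 = \binom{e_0-b+1}{2} - \binom{e_0-b}{2} - 1 = \binom{e_0-b+1}{2}+b - e_1 - \ell(M/IM),
\]
where the last step uses the value of $e_1$ from (i), proves (i) $\Rightarrow$ (iv). Conversely, for (iv) $\Rightarrow$ (i), combine Lemma \ref{I6}(i) with Lemma \ref{I6}(v) to get the a priori bound $\reg(G_I(M))=p\le e_0-b-1$, then rewrite the right side as in the display above but with $e_1$ bounded below via Proposition \ref{H1}(\ref{EH12}) (which reverses the direction of the inequality); together with hypothesis (iv) this squeezes the inequality to an equality, giving (i).

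I do not expect any genuine obstacle, since this proposition is the exact analogue of Proposition \ref{H3} and every tool has a primed counterpart tailored for the regime $e_0(I,M) > e_0(\mfr^b, M)$. The only point that demands a small amount of care is keeping track of the shifted indices ($e_0-b-1$ in place of $e_0-b$, coefficient $b+1$ in place of $b$, etc.) when reading off the Hilbert series in (ii) and when doing the binomial arithmetic in the (i) $\Leftrightarrow$ (iv) step; a sign or index slip there would corrupt the statement, so I would double-check those identities before committing to the final writeup.
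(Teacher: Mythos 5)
Your proposal is correct and follows exactly the route the paper intends: the paper gives no separate proof of Proposition \ref{H5}, stating only that it follows from the argument of Proposition \ref{H3} with Lemma \ref{I6}(iv),(v) and Proposition \ref{H1}(i'),(ii'),(iii') substituted for their unprimed counterparts, which is precisely what you carry out (your index bookkeeping and the binomial identity in the (i) $\Leftrightarrow$ (iv) step check out). The only minor divergence is that in (iv) $\Rightarrow$ (i) you derive $\reg(G_I(M)) \le e_0(I,M)-b-1$ from Lemma \ref{I6}(i) and (v) rather than citing \cite[Proposition 2.1]{D} as the paper does in the proof of Proposition \ref{H3}; this is harmless and in fact better adapted to the shifted bound needed here.
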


%%%%%%%%%%%%%%%%%%%%%%%%%%%%%%%%%%%%%%%%%%%%%%%%%%%%%%%%%%%%%%%%%%%%%%%%%%%%%%%%%%%%%%%%%%%%%%%%%%%
%%%%%%%%%%%%%%%%%%%%%%%%%%%%%%%%%%%%%%%%%%%%%%%%%%%%%%%%%%%%%%%%%%%%%%%%%%%%%%%%%%%%%%%%%%%%%%%%%%%
%%%%%%%%%%%%%%%%%%%%%%%%%%%%%%%%%%%%%%%%%%%%%%%%%%%%%%%%%%%%%%%%%%%%%%%%%%%%%%%%%%%%%%%%%%%%%%%%%%%
%%%%%%%%%%%%%%%%%%%%%%%%%%%%%%%%%%%%%%%%%%%%%%%%%%%%%%%%%%%%%%%%%%%%%%%%%%%%%%%%%%%%%%%%%%%%%%%%%%%
%%%%%%%%%%%%%%%%%%%%%%%%%%%%%%%%%%%%%%%%%%%%%%%%%%%%%%%%%%%%%%%%%%%%%%%%%%%%%%%%%%%%%%%%%%%%%%%%%%%
%%%%%%%%%%%%%%%%%%%%%%%%%%%%%%%%%%%%%%%%%%%%%%%%%%%%%%%%%%%%%%%%%%%%%%%%%%%%%%%%%%%%%%%%%%%%%%%%%%%
\medskip
\section{The first Hilbert function of an $\mfr$-primary ideal} \label{e1Id}

In this section we consider the case $M=A$, that is we study the first Hilbert coefficient of an 
$\mfr$-primary ideal $I$ of a Cohen-Macaulay local ring $(A,\mfr)$. 
If $b\ge 2$, see Theorem \ref{EB3} below.
If $b=1$, then the Rossi-Valla bound in the statement (i) of the following lemma is clearly much better. 

\begin{lem} \label{T1} Let $(A,\mfr)$ be a $d$-dimensional Cohen-Macaulay ring and $I$ an $\mfr$-primary ideal. Then
\begin{itemize}
\item[(i)] {\rm (\cite[Theorem 3.2]{RV1}) }
\begin{equation}\label{ET10}
e_1(I) \le \binom{e_0(I)}{2} - \binom{\mu (I) - d}{2} - \ell(A/I) +1,
\end{equation} 
where $\mu(I)$ denotes the number of generators of $I$.
\item[(ii)] {\rm (A partial case of \cite[Theorem 3.2]{RV1}) } If $d=1$, then we also have
$$e_1(I) \le \binom{e_0(I)}{2} - \binom{\mu (\tilde{I}) - 1}{2} - \ell(A/\tilde{I}) +1.$$
\end{itemize} 
\end{lem}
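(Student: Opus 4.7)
The plan is to follow the Rossi--Valla strategy from \cite{RV1}, reducing the general-dimensional case to dimension one and then carrying out a counting argument on the Hilbert function of $G_I(A)$.

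For the reduction I would pick an $A$-superficial sequence $x_1,\ldots,x_{d-1}\in I$ and set $\bar A := A/(x_1,\ldots,x_{d-1})$, $\bar I := I\bar A$. Iterating Lemma~\ref{I2}, and noting that the error term $\ell(0:x)=0$ at each stage by Cohen--Macaulayness, gives $e_0(I)=e_0(\bar I)$, $e_1(I)=e_1(\bar I)$, and $\ell(A/I)=\ell(\bar A/\bar I)$. Since $\bar I$ is generated by the images of a minimal system of generators of $I$, the standard inequality $\mu(\bar I)\ge \mu(I)-(d-1)$ holds, hence $\binom{\mu(I)-d}{2}\le\binom{\mu(\bar I)-1}{2}$. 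So (i) reduces to its $d=1$ case.

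In dimension one I would use $e_1(I)=\sum_{n=0}^{pn(I)-1}(e_0(I)-H_I(n))$. The $n=0$ contribution is exactly $e_0(I)-\ell(A/I)$; the $n=1$ contribution is at most $e_0(I)-\mu(I)$ because $H_I(1)=\ell(I/I^2)\ge\ell(I/\mfr I)=\mu(I)$. For $n\ge 2$, I would combine Lemma~\ref{I6}(ii)--(iii) (giving $H_I(n)\ge n+1$ and $pn(I)\le e_0(I)-1$) with the strict-increase property from Lemma~\ref{I5} applied to $\overline{G_I(A)}$, which is one-dimensional Cohen--Macaulay with generating degree at most one. Collecting the bounded contributions and simplifying should yield the claimed inequality in~(i). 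Part~(ii) then follows immediately: in dimension one the Ratliff--Rush closure satisfies $\widetilde{I^n}=I^n$ for $n\gg 0$, so $e_0(\tilde I)=e_0(I)$ and $e_1(\tilde I)=e_1(I)$, and applying (i) with $d=1$ to $\tilde I$ in place of $I$ gives (ii).

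The main obstacle is calibrating the intermediate Hilbert function lower bounds so that the summation produces exactly the correction term $\binom{\mu(I)-1}{2}$ rather than a weaker quantity. One must coordinate two facts simultaneously: that $H_I(1)\ge\mu(I)$, and that $H_{\overline{G_I(A)}}$ is strictly increasing on $[\Delta,pn]$, which forces $H_I$ to reach the value $e_0(I)$ after only a short further range. Matching this dynamic carefully against the counting sum is the delicate part of the argument.
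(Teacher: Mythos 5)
Your reduction to dimension one via a superficial sequence is fine, and so is your derivation of (ii) from (i): since $(\tilde I)^n=I^n$ for $n\gg 0$, all Hilbert coefficients of $I$ and $\tilde I$ coincide, so applying (i) to $\tilde I$ gives (ii); this is arguably a cleaner route than the paper's, which just cites \cite[Theorem 3.1]{RV1} again. The genuine gap is in the dimension-one core of (i). You propose to anchor the count $e_1(I)=\sum_{n\ge 0}(e_0(I)-H_I(n))$ at $H_I(1)\ge\mu(I)$ and then propagate by the strict-increase property of Lemma \ref{I5}. But that property belongs to $\overline{G_I(A)}$, whose degree-one component has length $\ell\bigl(I/(\widetilde{I^2}\cap I)\bigr)$, and this can be strictly smaller than $\mu(I)$ because a minimal generator of $I$ may lie in $\widetilde{I^2}$. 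Consequently the implicit estimate $H_I(n)\ge\mu(I)+n-1$ is false in general. Concretely, take $A=k[[t^5,t^6,t^{14}]]$ and $I=\mfr$: then $e_0=5$, $\mu=3$, $H_I=1,3,4,4,5,5,\dots$, so $H_I(3)=4<\mu+2$; here $t^{14}\in\widetilde{\mfr^2}$, so $H_{\overline{G}}(1)=2<\mu$. Moreover $e_1=8$, while your counting scheme would cap $e_1$ at $(5-1)+(5-3)+(5-4)=7$, i.e.\ it would prove a false inequality. So the ``calibration'' you flag as the delicate point is in fact an obstruction: no lower bound on $H_I$ alone yields the correction term $\binom{\mu(I)-1}{2}$.

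The paper does not attempt such a count. It invokes \cite[Theorem 3.1]{RV1}, which gives $e_1(I)\le\binom{e_0(I)}{2}-\binom{g-1}{2}-\ell(A/\tilde I)+1$ with $g=\ell(\tilde I/\widetilde{I^2})+\sum_{i\ge 2}\ell\bigl(\widetilde{I^{i+1}}/(I\widetilde{I^i}+\widetilde{I^{i+2}})\bigr)$ --- that is, the Hilbert-function counting is carried out on the Ratliff--Rush filtration, whose associated graded module has positive depth --- together with the nontrivial inequality $g\ge\mu(I)+\ell(\tilde I/I)$. The actual new content of the paper's proof is the numerical step $\binom{g-1}{2}+\ell(A/\tilde I)\ge\binom{\mu(I)-1}{2}+\ell(A/I)$, which uses the surplus $\ell(\tilde I/I)$ inside $g$ to trade $\ell(A/\tilde I)$ for $\ell(A/I)$ (after disposing of the trivial case $\mu(I)=1$). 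To repair your argument you would need to replace your degree-one anchor $H_I(1)\ge\mu(I)$ by this Ratliff--Rush machinery.
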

\begin{proof} There is an unclear step in the proof of \cite[Theorem 3.2]{RV1} in the case $d=1$: from the context, $\lambda$ in \cite[(8)]{RV1} should be $\ell(A/\tilde{I})$, see at the beginning of \cite[Section 3]{RV1}. 
Therefore we give here a correction of this part. 
So, we may assume that $d=1$ and we need to show 
\begin{equation}\label{ET11}
e_1(I) \le \binom{e_0(I)}{2} - \binom{\mu (I) - 1}{2} - \ell(A/I) +1.
\end{equation}
If $\mu(I) = 1$, then $I$ is a parameter ideal, which implies $e_1(I) =0$ and the inequality holds true. Now let $\mu(I)\ge 2$. By \cite[Theorem 3.1]{RV1},
\begin{equation}\label{ET12}
e_1(I) \le \binom{e_0(I)}{2} - \binom{g - 1}{2} - \ell(A/\tilde{I}) +1,
\end{equation}
where
$$g = \ell(\tilde{I}/ \widetilde{I^2}) + \sum_{i\ge 2}\ell\left(\frac{\widetilde{I^{i+1}}}{I\widetilde{I^i} + \widetilde{I^{i+2}}}\right).$$
At the end of the proof of \cite[Theorem 3.1]{RV1}, it is shown that
$$\begin{array}{ll}
g &\ge \ell(\tilde{I}/ \widetilde{I^2} + I\mfr) + \ell(\widetilde{I^2} + I\mfr/I\mfr )\\ \\
& = \ell(\tilde{I}/ I\mfr) = \ell(\tilde{I}/I) + \mu(I).
\end{array}$$
Set $\tilde{l} := \ell(\tilde{I}/I) $. 
Then we get

\begin{eqnarray}\nonumber
\binom{g - 1}{2} + \ell(A/\tilde{I}) &\ge &\binom{\mu(I) - 1 + \tilde{l}}{ 2} +
 \ell(A/\tilde{I})\\
&=& \binom{\mu(I) -1}{2} + \frac{(2\mu(I) - 3) \tilde{l} +  \tilde{l}^2}{2} + 
\ell(A/\tilde{I}) \nonumber \\
& \ge & \binom{\mu(I) -1}{2} + \frac{ \tilde{l}( \tilde{l}+1)}{2} + 
\ell(A/\tilde{I}) \ \text{(since} \ \mu(I) \ge 2). \label{ET13}
\end{eqnarray}

If $\tilde{l} = 0$, then $\ell(A/\tilde{I}) = \ell(A/I)$. If $\tilde{l}\ge 1$, then
\begin{equation}\label{ET15}
\frac{ \tilde{l}( \tilde{l}+1)}{2} +  \ell(A/\tilde{I}) \ge \tilde{l} + \ell(A/\tilde{I}) = \ell(A/I).
\end{equation}
In both cases, from (\ref{ET13}) we get
\begin{equation}\label{ET16} 
\binom{g - 1}{2} + \ell(A/\tilde{I})  \ge \binom{\mu(I) -1}{2}  +\ell(A/I).
\end{equation}
Combining this with (\ref{ET12}) we immediately get (\ref{ET11}).
\end{proof}

\medskip
Using the Rossi-Valla bound (\ref{ET10}) we can immediately see that if $I$ satisfies the condition (i)  of Proposition \ref{H3} (with $M=A$), then $\mu(I) \le 2$. 
However, if $I$ is a parameter ideal, then $e_1(I) =0$, while 
$\binom{e_0(I)}{2} + 1 - \ell(A/I) = 
\binom{e_0(I)}{2} + 1 -  e_0(I) \ge 1$, 
provided  $e_0(I) \ge 3$. 
This contradicts the condition (i). So,  $\mu(I) =2$. 
Below are more information on the structure of $I$ and $A$ itself, when $I$ satisfies the condition (i)  of Proposition \ref{H3}, or equivalently, when the Rossi-Valla bound (\ref{ET10}) is attained, provided $\mu = 2$.

\begin{prop} \label{T2} Let  $(A,\mfr)$ be an one-dimensional  Cohen-Macaulay ring and $I$ an $\mfr$-primary ideal such that $ e_0(I) \ge 3$ and 
$e_1(I) = \binom{e_0(I)}{2} + 1 - \ell(A/I)$. Then we have
\begin{itemize}
\item[(i)] $\Itil = \mfr$, $I^2 = \mfr I$ and $\mu(I) = 2$.
\item[(ii)] $\mu(\mfr) \in \{2, 3\}$,  and 
\item[(iii)] If $\mu(\mfr) =2$, then $I=\mfr$ and  $G(\mfr)$ is a Cohen-Macaulay ring.
\item[(iv)] If $\mu(\mfr)=3$, then $I^n=\mfr^n$ for all $n\ge 2$ and  $\ell(A/I) = 2$. 
In this case $\depth G(I) = 0$.
\end{itemize} 
\end{prop}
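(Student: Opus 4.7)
The plan is first to establish (i), then to deduce $t:=\ell(A/I)\in\{1,2\}$ by re-inspecting the proof of Lemma~\ref{T1}(i), and finally to read off (ii)--(iv).

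For (i), our hypothesis is precisely the equality case of Proposition~\ref{H1}(\ref{EH11}) with $b=1$, so by Proposition~\ref{H1}(iii) (whose range is non-empty since $pn(I)=e_0-1\ge 2$ by Proposition~\ref{H1}(ii) and $e_0\ge 3$) we have $H_I(1)=\ell(I/I^2)=2$. Lemma~\ref{T1}(i) at $d=1$ gives $e_1(I)\le\binom{e_0}{2}-\binom{\mu(I)-1}{2}-\ell(A/I)+1$, and matching this against the equality forces $\binom{\mu(I)-1}{2}\le 0$, so $\mu(I)\le 2$. The case $\mu(I)=1$ is excluded: $I$ would be a parameter ideal with $e_1(I)=0$, whereas $\binom{e_0}{2}+1-\ell(A/I)\ge\binom{e_0}{2}+1-e_0\ge 1$ for $e_0\ge 3$. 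Hence $\mu(I)=2=\ell(I/I^2)$ and Nakayama gives $\mfr I=I^2$; then $\mfr\subseteq I^2:I\subseteq\Itil\subseteq\mfr$ yields $\Itil=\mfr$, proving (i).

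For the dichotomy I would reopen the Rossi--Valla proof of Lemma~\ref{T1}(i). By \cite[Theorem~3.1]{RV1}, $e_1(I)\le\binom{e_0}{2}-\binom{g-1}{2}-\ell(A/\Itil)+1$ with $g\ge\mu(I)+\tilde l=2+\tilde l$, where $\tilde l:=\ell(\Itil/I)$. Hence $\binom{g-1}{2}\ge\binom{\tilde l+1}{2}$, and plugging in the equality hypothesis (together with $\ell(A/I)=\ell(A/\Itil)+\tilde l$) yields $\binom{\tilde l+1}{2}\le\tilde l$, i.e., $\tilde l\in\{0,1\}$. Since $\Itil=\mfr$ this reads $t=1+\tilde l\in\{1,2\}$.

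Next, the natural map $I/\mfr I\to\mfr/\mfr^2$ is injective: $\mfr^2=\Itil^2\subseteq\widetilde{I^2}$ by multiplicativity of the Ratliff--Rush filtration, and $\widetilde{I^2}\cap I=I^2$ since $a_0(G_I(A))\le 0$ (Proposition~\ref{H1}(i)); hence $I\cap\mfr^2\subseteq I^2=\mfr I$. The image then has $k$-dimension $\mu(I)=2$ and the cokernel is $(\mfr/I)/\mfr(\mfr/I)$, so $\mu(\mfr)=2+\mu(\mfr/I)$.

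Split on $t$. If $t=1$, then $I=\mfr$ and $\mu(\mfr)=2$; also $H^0_{G_+}(G_\mfr(A))=0$ (degree-$0$ piece is $\mfr/I=0$, higher pieces vanish by $a_0\le 0$), so $G(\mfr)$ is Cohen--Macaulay, giving (iii). If $t=2$, then $\mfr/I\cong A/\mfr$ (length $1$), so $\mu(\mfr/I)=1$ and $\mu(\mfr)=3$; moreover $\mfr\cdot(\mfr/I)=0$ yields $\mfr^2\subseteq I$, hence $\mfr^2=\mfr^2\cap I=I^2$, and inductively $\mfr^{n+1}=\mfr\cdot\mfr^n=\mfr\cdot I^n=(\mfr I)I^{n-1}=I^{n+1}$ for $n\ge 1$, so $\mfr^n=I^n$ for all $n\ge 2$. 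Finally $\depth G(I)=0$ since $H^0_{G_+}(G_I(A))_0=\mfr/I$ is nonzero of length $1$. This proves (iv), and (ii) follows. I expect the main obstacle to be isolating $\tilde l\le 1$, which requires opening up the Rossi--Valla proof rather than using Lemma~\ref{T1}(i) as a black box.
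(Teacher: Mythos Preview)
Your proof is correct and shares the paper's overall strategy: both open up the Rossi--Valla argument behind Lemma~\ref{T1} to pin down $\tilde{l}=\ell(\Itil/I)\le 1$, and both use $\widetilde{I^2}\cap I=I^2$ (from $a_0(G_I(A))\le 0$) to show that minimal generators of $I$ avoid $\mfr^2$. The details diverge in a few places. For $\Itil=\mfr$, the paper reads $\ell(A/\Itil)=1$ off Lemma~\ref{I6}(iii), while you obtain $\mfr I=I^2$ first and then use $\mfr\subseteq I^2:I\subseteq\Itil$. For bounding $\mu(\mfr)$, the paper invokes the second Rossi--Valla bound (Lemma~\ref{T1}(ii)) together with $e_0(\mfr)=e_0(I)\ge 3$, whereas you compute $\mu(\mfr)=2+\mu(\mfr/I)$ directly from the short exact sequence $0\to I/\mfr I\to\mfr/\mfr^2\to\mfr/(I+\mfr^2)\to 0$. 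The most significant difference is in part~(iv): the paper works with explicit generators $a_1,a_2,a_3$ of $\mfr$ and manipulates relations of the form $a_ia_3\in I^2$ to eventually force $a_3^2\in I^2$; your argument is much cleaner, observing that $\ell(\mfr/I)=1$ gives $\mfr^2\subseteq I$, whence $\mfr^2=\mfr^2\cap I=I^2$ immediately from the already-established identity $I\cap\mfr^2=I^2$. Your route avoids both the second Rossi--Valla bound and the generator calculations, at the cost of needing the Ratliff--Rush multiplicativity $\Itil^2\subseteq\widetilde{I^2}$ explicitly.
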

\begin{proof} We set $e_0 := e_0(I)$.
\noindent
(i)  $\mu(I) = 2$ was shown above. By Proposition \ref{H1}(ii), $b=1$ and $pn(I) = e_0(I) -1$. 
Hence, by Lemma \ref{I6}(iii), $\ell(\overline{G(I)_0}) = 1$. By (\ref{EP3}), we then get $\ell(A/\Itil) = \ell(\overline{G(I)_0}) = 1$. Since $\Itil \subseteq \mfr$, we must have $\Itil = \mfr$.
By Proposition \ref{H1}(iii), $\ell(I/I^2) = 2$. Since $2 = \mu(I) = \ell(I/\mfr I) \le \ell(I/I^2) = 2$, we get $I^2 = \mfr I$. 

\noindent
(ii) Since $\mu(I) =2$, the equality in (\ref{ET10})  also holds for $I$. From (\ref{ET13}), (\ref{ET15}) and (\ref{ET16}) we must have $\ell(\Itil/I) \leq 1$. Since 
$\Itil = \mfr$, we get $\ell(A/I)\le 2$, and by Lemma \ref{T1}(ii), we now have
$$\binom{e_0}{2} + 1 - \ell(A/I) = e_1 \le \binom{e_0}{2} - \binom{\mu(\mfr) - 1}{2} .$$
This implies $\binom{\mu(\mfr) - 1}{2} \le 1$, whence $\mu(\mfr) \le 3$. By Proposition \ref{H3}, $e_0(\mfr) = e_0(I) \ge 3$. Hence $\mu(\mfr) \ge 2$.

Assume that $a$ is an element in a minimal basis of $I$. We first show that $a\not\in\mfr^2$. Assume by contrary, that $a \in \mfr^2$. Since $\Itil = \mfr$, we get
$$a \in \mfr^2 \cap I = (\Itil)^2 \cap I \subseteq \widetilde{I^2} \cap I.$$
By Proposition \ref{H3}(iii) and (\ref{EP3}), we get
$$0 = H^0_{G(I)_+}(G(I))_1 \cong \frac{\widetilde{I^2} \cap I}{I^2},$$
which implies 
\begin{equation}\label{ET21}\widetilde{I^2} \cap I =I^2.
\end{equation}
Hence $a\in I^2$, a contradiction.

The condition  that any element in a minimal basis of $I$ does not belong to $\mfr^2$  implies that the images of $a_1,a_2$ of a minimal basis of $I$ in $\mfr/\mfr^2$ are linearly independent. 
This means that $\{a_1,a_2\}$ is a part of minimal basis of $\mfr$.

\noindent
(iii) If $\mu(\mfr) = 2$ then $I = \mfr$ and
 $H^0_{G_+}(G(I))_0 =0$.
Since $a_0(G(I))\le 0$ (by Proposition \ref{H3}(iii)), $H^0_{G_+}(G(I)) = 0$, and $G(\mfr) = G(I)$ is a Cohen-Macaulay ring.

\noindent
(iv) Assume now that $\mu(\mfr) = 3$. As shown above $\ell(A/I)\le 2$. So we must have  $\ell(A/I)= 2$. Assume that $\mfr= (a_1,a_2,a_3)$, where $\{ a_1,a_2\}$ is a minimal basis of 
$I$. Moreover, we may assume that both elements $a_1,a_2$ are non-zero divisors of $A$. Since $\Itil = \mfr$, by (\ref{ET21}), we have
$$a_1 a_3 \in I \cap (\Itil)^2  \subseteq I \cap \widetilde{I^2} = I^2 = (a_1,a_2)^2.$$
Hence 
\begin{equation}\label{ET22}
a_1a_3 = y_1a_1^2 + y_2a_1 a_2 + y_3 a_2^2,
\end{equation}
for some $y_1,y_2,y_3 \in A$.  Replacing $a_3$ by $a_3- y_1a_1 - y_2a_2$ in the above relation, we may assume that
$$a_1a_3 = y_3a_2^2.$$
Analogously, we can find $z_1,z_2,z_3 \in A$ such that
\begin{equation} \label{ET23} a_2a_3 = z_1a_1^2 + z_2a_1 a_2 + z_3 a_2^2.\end{equation}
Then 
$$\begin{array}{ll}
a_2a_3^2 &= z_1a_1^2 a_3 + z_2a_1 a_2 a_3 + z_3 a_2^2 a_3\\
&= z_1 a_1 y_3a_2^2 + z_2 y_3 a_2^3 + z_3 a_2^2 a_3.
\end{array}$$
Since $a_2$ is a non-zero divisor, this implies
$$a_3^2 = z_1y_3  a_1 a_2 + z_2 y_3 a_2^2+ z_3 a_2 a_3 \in I^2.$$
Together with (\ref{ET22}) and (\ref{ET23}), this shows that $I^2 = \mfr^2$, and by induction $I^n = \mfr^n$ for all $n\ge 2$.
In this case, by (\ref{EP3}), $H^0_{G_+}(G(I))_0 \cong \frac{\Itil}{I} = \frac{\mfr}{I} \neq 0$, $\depth G(I) = 0$.
\end{proof}

\medskip
\begin{rem}
The Cohen-Macaulayness of $G(\mfr)$ in (iii) of the above proposition is known long time ago, see, e.g. \cite[p. 19]{Sa}.

If $\mu (I) >2$, then  the Rossi-Valla bound (\ref{ET10}) is much better. An ideal, for which   the Rossi-Valla bound (\ref{ET10}) is attained, may have an arbitrary number of generators. For an example, take $I = \mfr$ in $A= k[[t^a,t^{a+1},...,t^{2a-1}]], a\ge 3$. 
Then $e_0(\mfr) = a$ and 
$$e_1(\mfr) = a-1 = \binom{e_0(\mfr)}{2} - \binom{\mu(\mfr) -1}{2}.$$
If $I= \mfr$  the Rossi-Valla bound (\ref{ET10})   is Elias' bound given in \cite[Theorem 1.6]{E0}. 
In \cite[Theorem 3.1]{ERV1}, there is a characterization in terms of Hilbert series for an one-dimensional Cohen-Macaulay  ring such that the Elias' bound is attained. See also \cite[Proposition 3.3]{RV1} for a shorter proof.
\end{rem}

\begin{exm} \label{T3} Let $a\ge 3$ and $A= k[[t^a, t^{a+1}, t^{a^2-a-1}]]$ and $I= (t^a,t^{a+1})$. Then 
$$\ell(A/I) = 2,\ e_0(I) = e_0(\mfr) = a,\ e_1(I) = e_1(\mfr) = \binom{a}{2} -1 = 
\binom{e_0(I)}{2} + 1 - \ell(A/I).$$
This is the situation in (iv) of the above proposition. 
Note that $G(\mfr)$ is  a Cohen-Macaulay ring only in the case $a=3$. 
This was indicated in \cite[p. 19]{Sa} in the case $a=3$ and in \cite[Proposition 4.6(2)]{E0} for $a\ge 4$.
\end{exm}
 
We now give a new bound on $e_1(I)$ for an $\mfr$-primary ideal $I\subseteq \mfr^b$ and $b\ge 2$. It is in fact a correction of the bound given in \cite[Proposition 1.1]{E2}. The following result was stated for any dimension $d\ge 1$, but the proof there is  valid only for $d=1$, because in general one cannot find an element $x\in \mfr$ such that it is simultaneously superficial for both $\mfr$ and $I$.

\begin{lem}\cite[Proposition 1.1]{E2} \label{EB1} Let $I\subseteq \mfr^b$ be an $\mfr$-primary ideal of an one-dimensional Cohen-Macaulay ring $A$. Then
$$e_1(I) \le (e_0(\mfr) - 1)(e_0(I) - be_0(\mfr)) + e_1(\mfr).$$
\end{lem}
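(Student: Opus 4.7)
The plan is to work directly in dimension one, since no reduction is required. Pick a superficial (hence non-zerodivisor) element $x\in I$; then $e_0(I)=\ell(A/xA)$ and $I^{n+1}=xI^n$ for $n\gg 0$, which yields the standard identification $e_1(I)=\ell(A/x^{n+1}A)-\ell(A/I^{n+1})=\ell(I^{n+1}/x^{n+1}A)$ for such $n$. Since $x\in I\subseteq\mfr^b$, for $n\gg0$ one has the chain
$$x^{n+1}A\subseteq I^{n+1}=xI^n\subseteq x\mfr^{bn}\subseteq\mfr^{b(n+1)}.$$

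Write $\delta:=e_0(I)-be_0(\mfr)\ge 0$. I will compute the total length along this chain in two different ways. Directly, for $n\gg0$, the outer length is
$$\ell(\mfr^{b(n+1)}/x^{n+1}A)=(n+1)e_0(I)-\bigl(b(n+1)e_0(\mfr)-e_1(\mfr)\bigr)=(n+1)\delta+e_1(\mfr).$$
The middle gap is
$$\ell(\mfr^{b(n+1)}/x\mfr^{bn})=\ell(\mfr^{bn}/x\mfr^{bn})-\ell(\mfr^{bn}/\mfr^{b(n+1)})=e_0(I)-be_0(\mfr)=\delta,$$
using that $\mfr^{bn}$ is a rank-one maximal Cohen-Macaulay $A$-module (so $\ell(\mfr^{bn}/x\mfr^{bn})=e_0((x),\mfr^{bn})=e_0(I)$) together with $\ell(\mfr^{bn}/\mfr^{b(n+1)})=be_0(\mfr)$ for $bn\ge pn(\mfr)$. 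Finally $\ell(x\mfr^{bn}/I^{n+1})=\ell(\mfr^{bn}/I^n)$ by the injectivity of multiplication by $x$. Adding the three pieces along the chain produces, for $n\gg 0$, the identity $e_1(I)=n\delta+e_1(\mfr)-\ell(\mfr^{bn}/I^n)$.

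The stated bound is therefore equivalent to showing $\ell(\mfr^{bn}/I^n)\ge(n+1-e_0(\mfr))\delta$ for $n\gg 0$. The plan is to prove this by descending the recursion $\ell(\mfr^{b(n+1)}/I^{n+1})-\ell(\mfr^{bn}/I^n)\ge\delta$ from $n\gg 0$ down to $n=e_0(\mfr)-1$, at which stage the trivial inequality $\ell(\mfr^{b(e_0(\mfr)-1)}/I^{e_0(\mfr)-1})\ge 0$ suffices to conclude. The key input is the sharp regularity estimate $pn(\mfr)\le e_0(\mfr)-1$ from Lemma~\ref{I6}(iii) (applied to the pair $(\mfr,A)$ with $b=1$), which allows the $\mfr$-side Hilbert--Samuel formula to be used already at $n=e_0(\mfr)-1$. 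The hardest step will be showing that the recursion survives on the finite range $e_0(\mfr)-1\le n<n_0$ before the superficial-element identity $I^{n+1}=xI^n$ takes hold; here one must combine the Hilbert--Samuel lower bound $\ell(A/I^m)\ge me_0(I)-e_1(I)$ (valid for all $m\ge 0$ in 1D Cohen--Macaulay rings) with the equality $\ell(\mfr^{bn}/x\mfr^{bn})=e_0(I)$ that holds for every $n\ge 0$. This is precisely where the 1-dimensional hypothesis matters, because for $d\ge 2$ one cannot in general pick a single element simultaneously superficial for both $\mfr$ and $I$.
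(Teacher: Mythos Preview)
The paper does not supply its own proof of this lemma; it is simply quoted from \cite{E2}, together with the remark that Elias's argument (which uses an element simultaneously superficial for $\mfr$ and for $I$) is valid only in dimension one.

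Your reduction is fine up to the identity $e_1(I)=n\delta+e_1(\mfr)-\ell(\mfr^{bn}/I^n)$ for $n\gg 0$, and the claim is indeed equivalent to $\ell(\mfr^{bn}/I^n)\ge (n+1-e_0(\mfr))\delta$ for large $n$. The gap is in the ``descent''. The increment you want to control satisfies
\[
\ell(\mfr^{b(n+1)}/I^{n+1})-\ell(\mfr^{bn}/I^n)=H_I(n)-\ell(\mfr^{bn}/\mfr^{b(n+1)})=H_I(n)-b\,e_0(\mfr)
\]
for $bn\ge pn(\mfr)$. Since $xI^n\subseteq I^{n+1}$ gives $H_I(n)=\ell(I^n/I^{n+1})\le\ell(I^n/xI^n)=e_0(I)$, the increment is \emph{at most} $\delta$, not at least $\delta$. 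Hence $n\mapsto \ell(\mfr^{bn}/I^n)-n\delta$ is non-increasing on that range, and the trivial bound $\ell(\mfr^{b(e_0(\mfr)-1)}/I^{e_0(\mfr)-1})\ge 0$ at the bottom of the range tells you nothing about its (smaller) value at large $n$.

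Your proposed rescue via $\ell(A/I^m)\ge m\,e_0(I)-e_1(I)$ is circular: inserting it into $\ell(\mfr^{bn}/I^n)=\ell(A/I^n)-\ell(A/\mfr^{bn})$ yields $\ell(\mfr^{bn}/I^n)\ge n\delta+e_1(\mfr)-e_1(I)$, and comparing with the target $(n+1-e_0(\mfr))\delta$ reproduces exactly the inequality $e_1(I)\le(e_0(\mfr)-1)\delta+e_1(\mfr)$ you are trying to prove. Note also that your closing comment about ``simultaneously superficial for both $\mfr$ and $I$'' is borrowed from the paper's remark on Elias's proof, but your own argument never invokes such an element; so the remark does not explain where your approach uses $d=1$, and in fact the missing idea is precisely the extra control that a genuine $\mfr$-superficial element provides.
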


Modifying the bound in the above lemma, we can give a new bound on $e_1(I)$ for any dimension.

\begin{thm} \label{EB3} Let $A$ be a Cohen-Macaulay ring of dimension $d\ge 1$. Let $I\subseteq \mfr^b$ be an $\mfr$-primary ideal, where $b\ge 1$. 
Then
$$e_1(I) \le \frac{1}{2b-1}\binom{e_0(I) - b+1}{2} - \binom{\mu(m) - d}{2}.$$
\end{thm}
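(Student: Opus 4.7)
The plan is to reduce to the one-dimensional case by a superficial sequence, then to combine the one-dimensional estimate of Lemma \ref{EB1} with the Rossi--Valla bound of Lemma \ref{T1}(i) applied to the maximal ideal $\mfr$, and finish with an elementary arithmetic inequality.

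For the reduction, since the residue field is infinite and $A$ is Cohen-Macaulay, I would pick a superficial sequence $x_1,\ldots,x_{d-1}\in I$ for $I$; this is then an $A$-regular sequence. Set $\bar A := A/(x_1,\ldots,x_{d-1})$, $\bar I := I\bar A$ and $\bar\mfr := \mfr\bar A$. Then $\bar A$ is a one-dimensional Cohen-Macaulay local ring with $\bar I\subseteq \bar\mfr^b$, and iterated application of parts (ii) and (iii) of Lemma \ref{I2} (using that each intermediate module is Cohen-Macaulay, so $(0:x_i)=0$) gives $e_0(\bar I) = e_0(I)$ and $e_1(\bar I) = e_1(I)$. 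As $\bar\mfr$ is generated by the images of any minimal basis of $\mfr$, we have $\mu(\bar\mfr)\ge \mu(\mfr)-(d-1)$, hence $\binom{\mu(\bar\mfr)-1}{2} \ge \binom{\mu(\mfr)-d}{2}$. It therefore suffices to prove the theorem when $d=1$.

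Assume $d=1$. Combining Lemma \ref{EB1} with Lemma \ref{T1}(i) applied to $\mfr$ (using $\ell(A/\mfr)=1$) gives
$$e_1(I)\le (e_0(\mfr)-1)\bigl(e_0(I)-b\,e_0(\mfr)\bigr)+\binom{e_0(\mfr)}{2}-\binom{\mu(\mfr)-1}{2}.$$
Setting $e := e_0(\mfr)$ and $f := e_0(I)$, the dimension-one case therefore reduces to the purely numerical inequality
$$(e-1)(f-be)+\binom{e}{2}\le \frac{1}{2b-1}\binom{f-b+1}{2}$$
for integers $e\ge 1$, $b\ge 1$, $f\ge be$ (the last inequality following from $I\subseteq\mfr^b$).

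I expect this last arithmetic step to be the only technical point. To verify it, I would write $x:=f-be\ge 0$ and $y:=e-1\ge 0$, multiply through by $2(2b-1)$ and expand. A direct computation shows that the difference between the right-hand and left-hand sides equals exactly $w(w+1)$, where $w := x-(b-1)y\in\Zset$. Since $w(w+1)\ge 0$ for every integer $w$, the inequality follows, and unwinding the reductions yields the claimed bound on $e_1(I)$.
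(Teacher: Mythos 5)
Your proposal is correct and follows essentially the same route as the paper: reduce to dimension one by a superficial sequence, then combine Lemma \ref{EB1} with the bound $e_1(\mfr)\le\binom{e_0(\mfr)}{2}-\binom{\mu(\mfr)-1}{2}$ (which is Lemma \ref{T1}(i) at $I=\mfr$, i.e.\ Elias' bound used in the paper's proof). The only divergence is the final arithmetic step: the paper maximizes the resulting quadratic in $e_0(\mfr)$ over the reals and then invokes integrality of $e_1(I)$ via a floor-function argument, whereas your identity $2(2b-1)(\mathrm{RHS}-\mathrm{LHS})=w(w+1)$ with $w=(e_0(I)-b\,e_0(\mfr))-(b-1)(e_0(\mfr)-1)\in\Zset$ verifies the inequality directly at the integer point, which checks out and is, if anything, cleaner.
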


\begin{proof} First consider the case $d=1$. By Lemma \ref{EB1},
$$e_1(I) \le (e_0 - 1)(e_0(I) - be_0) + e_1(\mfr),$$
where we set $e_i := e_i(\mfr)$. By \cite[Theorem 1.6]{E0},
$$e_1 \le \binom{e_0}{ 2} - \binom{\mu(m) - 1}{2}.$$
Hence
$$\begin{array}{ll}
e_1(I) &\le (e_0 - 1)(e_0(I) - be_0)  + \frac{e_0(e_0-1)}{2} - \binom{\mu(m) - 1}{2}\\ \\
&= e_0^2(-b+\frac{1}{2}) + e_0(e_0(I) + b-\frac{1}{2}) - e_0(I)  - \binom{\mu(m) - 1}{2}.
\end{array}$$

\noindent
The function
$$f(t) = (-b+\frac{1}{2}) t^2 + (e_0(I) + b-\frac{1}{2}) t - e_0(I)$$
reaches its maximum at $t_0 = \frac{e_0(I) + b-\frac{1}{2}}{2(b-\frac{1}{2})}$ and
$$f(t_0) = \frac{(2e_0(I) - 2b+1)^2}{8(2b-1)} = \frac{1}{2b-1} \left\lbrace  \binom{e_0(I) - b+1}{2} + \frac{1}{8}\right\rbrace  .$$
 Note that $\lfloor \frac{m+\alpha}{n} \rfloor = \lfloor \frac{m}{n} \rfloor $ for any integers $n\ge 1$, $m$ and a real number $0\le \alpha <1$. 
 Hence
 $$ \begin{array}{ll}
 e_1(I) & \le \lfloor f(t_0)\rfloor - \binom{\mu(m) - 1}{2} \\  \\
 &= \lfloor \frac{1}{2b-1} \binom{e_0(I) - b+1}{2} \rfloor  - \binom{\mu(m) - 1}{2} \\ \\
 &\le  \frac{1}{2b-1} \binom{e_0(I) - b+1}{2}  - \binom{\mu(m) - 1}{2} .
\end{array}$$
Now let $d\ge 2$. Let $x\in I$ be a superficial element. 
Then $e_0(I/x) = e_0(I), \ e_1(I/x) = e_1(I)$, $I/x \subseteq (\mfr/x)^b$ and $\mu (\mfr/x) \ge \mu(\mfr)-1$. 
Hence, the conclusion follows by induction on the dimension.
\end{proof}
 
 \begin{rem} \label{EB4} Let $b\ge 2$. Then Theorem \ref{EB3} gives
\begin{equation}\label{EEB41} 
e_1(I) \le \lfloor \frac{1}{3}\binom{e_0(I) - b+1}{2}\rfloor - \binom{\mu(m) - d}{2}.
\end{equation}
 It is easy to check that 
 $$\lfloor \frac{1}{3}\binom{e_0(I) - b+1}{2}\rfloor  \le \binom{e_0(I) - b}{2}.$$
 This give another proof of Corollary \ref{H2} in the case $M=A$. 
 It  is clearly better than the bound of  Corollary \ref{H2} if $\mfr$ is generated by at least $d+2$ elements. 
 If $e_0(I) \ge b+5$, then from (\ref{EEB41}) we get a much better bound:
 $$e_1(I) \le \frac{1}{2}\binom{e_0(I) - b}{2} - \binom{\mu(m) - d}{2}.$$
\end{rem}

\begin{rem} \label{EB5} We now give a brief account of upper bounds on $e_1(I)$ of an $\mfr$-primary ideal $I$ of an one-dimensional Cohen-Macaulay ring $A$ such that $I \subsetneq \mfr^2$.
\begin{itemize}
\item[(i)] The first Rossi-Valla  bound (\ref{ET10}):
$$e_1(I) \le \binom{e_0(I)}{2} - \binom{\mu (I) - 1}{2} - \ell(A/I) +1.$$
\item[(ii)] The second  Rossi-Valla  bound (see Lemma \ref{T1}(ii)):
$$e_1(I) \le \binom{e_0(I)}{2} - \binom{\mu (\tilde{I}) - 1}{2} - \ell(A/\tilde{I}) +1.$$
\item[(iii)] Elias' bound (see Proposition \ref{EB1}):
$$e_1(I) \le (e_0(\mfr) - 1)(e_1(I) - be_0(\mfr)) + e_1(\mfr).$$
\item[(iv)] The Hanumanthu-Huneke bound \cite[Corollary 2.9]{HH}: Under the additional condition that $A$ is an analytically unramified local domain with algebraically closed residue field, we have
$$e_1(I) \le \binom{e_0(I) - \ell(A/\bar{I}) + 1}{2},$$
where $\bar{I}$ denotes the integral closure of $I$.
\item[(v)]  The case $b=1$ of Theorem \ref{EB3}
$$e_1(I) \le \binom{e_0(I)}{2} - \binom{\mu(m) -  1}{2}.$$
\end{itemize} 
One can give examples to show that these bounds are independent. Note that the bounds in (i) and (v) can be lifted to higher dimensions, while we could not do the same for the other bounds.
\end{rem}

%%%%%%%%%%%%%%%%%%%%%%%%%%%%%%%%%%%%%%%%%%%%%%%%%%%%%%%%%%%%%%%%%%%%%%%%%%%%%%%%%%%%%%%%%%%%%%%%%%%
%%%%%%%%%%%%%%%%%%%%%%%%%%%%%%%%%%%%%%%%%%%%%%%%%%%%%%%%%%%%%%%%%%%%%%%%%%%%%%%%%%%%%%%%%%%%%%%%%%%
%%%%%%%%%%%%%%%%%%%%%%%%%%%%%%%%%%%%%%%%%%%%%%%%%%%%%%%%%%%%%%%%%%%%%%%%%%%%%%%%%%%%%%%%%%%%%%%%%%%
%%%%%%%%%%%%%%%%%%%%%%%%%%%%%%%%%%%%%%%%%%%%%%%%%%%%%%%%%%%%%%%%%%%%%%%%%%%%%%%%%%%%%%%%%%%%%%%%%%%
%%%%%%%%%%%%%%%%%%%%%%%%%%%%%%%%%%%%%%%%%%%%%%%%%%%%%%%%%%%%%%%%%%%%%%%%%%%%%%%%%%%%%%%%%%%%%%%%%%%
%%%%%%%%%%%%%%%%%%%%%%%%%%%%%%%%%%%%%%%%%%%%%%%%%%%%%%%%%%%%%%%%%%%%%%%%%%%%%%%%%%%%%%%%%%%%%%%%%%%
\section{The second Hilbert coefficient} \label{e2Mo}

 Rhodes  \cite[Proposition 6.1(iv)]{Rh} proved that $e_2(I,M) \le \binom{e_1 (I,M)}{2}$.  Combining with the bound in Proposition \ref{H1}, we get $e_2(I,M) < \frac{1}{8}e_0(I,M)^4$.  
 In the case $I= \mfr$ and $M= A$,   there is a much better bound given in  \cite[Theorem 2.3]{ERV1}. 
 The bound also involves $e_1(\mfr)$ and some rather technical invariants.  
 As a consequence, it was shown there that 
 $e_2(\mfr) \le \binom{e_1(\mfr)}{2} - \binom{\mu(\mfr) - d}{2}$, which is of course better than Rhodes' bound in the case $I=\mfr$. 
 Applying known bounds on $e_1(\mfr)$ to the bound in \cite[Theorem 2.3]{ERV1}, one can show that $e_2(\mfr) < \frac{2}{3} e_0(\mfr)^3$.
 
The aim of this section is to give a new bound on $e_2(I,M)$ in terms of $e_0(I,M)$, which is less than  $\frac{1}{6}e_0(I,M)^3$, and to characterize when this bound is attained. 
In the case $M=A$, after finding some  relationships between the reduction number and the Hilbert coefficients, using Theorem \ref{EB3} we can give a better bound for a large class of $I$, see Theorem \ref{AB8}.

\begin{thm}\label{B1} Let  $M$ be a  Cohen-Macaulay module of $\dim(M)= d \geq 2$ over $(A,\mfr)$. 
Let $I$ be an $\mfr$-primary ideal such that $IM \subseteq \mfr^b M$, where $b$ is a positive integer.  Then
$$ e_2(I,M)  \leq \binom{e_0(I,M) -b+1}{3} .$$
\end{thm}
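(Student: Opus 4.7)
The plan is to reduce the problem to a one-dimensional Cohen-Macaulay module, where the \emph{extended} second Hilbert coefficient (defined by (\ref{EP1}) beyond the dimension) can be estimated directly from the Hilbert-function lower bounds in Lemma \ref{I6}. For $d \geq 3$, I would pick an $M$-superficial element $x_1$ for $I$ and set $M' = M/x_1 M$. By Lemma \ref{I2}(i), $M'$ is Cohen-Macaulay of dimension $d-1$; combining Lemma \ref{I2}(ii) with Lemma \ref{I2}(iii) (whose correction term vanishes since $x_1$ is $M$-regular) shows $e_j(I, M') = e_j(I, M)$ for $j = 0, \ldots, d-1$. The condition $IM' \subseteq \mfr^b M'$ is inherited, so iterating $d - 2$ times brings us to a Cohen-Macaulay module of dimension $2$ with $e_0$ and $e_2$ unchanged.

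Next, with $\dim M = 2$ and $x$ an $M$-superficial element, set $K = M/xM$. Lemma \ref{I2}(iv) with $\ell(0:_M x) = 0$ gives
\[
e_2(I, K) = e_2(I, M) + \sum_{i=0}^n \ell\!\big((I^{i+1}M : x)/I^i M\big)
\]
for $n \gg 0$, where $e_2(I, K)$ is the extended second coefficient on the one-dimensional $K$. Each summand is non-negative, hence $e_2(I, M) \leq e_2(I, K)$. Since $e_0(I, K) = e_0(I, M)$ by Lemma \ref{I2}(ii) and $IK \subseteq \mfr^b K$, the task reduces to proving $e_2(I, K) \leq \binom{e_0(I, K) - b + 1}{3}$ for $K$ of dimension one.

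For the one-dimensional step, set $e_0 = e_0(I, K)$ and $p = pn(I, K)$, and write $P_{I,K}(z) = Q(z)/(1-z)$ with $Q(z) = \sum_{n \geq 0} h_n z^n$, where $h_0 = H_{I,K}(0)$ and $h_n = H_{I,K}(n) - H_{I,K}(n-1)$ for $n \geq 1$. From $e_k = Q^{(k)}(1)/k! = \sum_{n \geq k} \binom{n}{k} h_n$ together with the hockey-stick identity $\sum_{n=0}^{j-1}\binom{n}{k-1} = \binom{j}{k}$, a routine partial summation yields
\[
e_2(I, K) = \sum_{n \geq 1} n \big(e_0 - H_{I, K}(n)\big).
\]
Only $1 \leq n \leq p - 1$ contributes. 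Applying the lower bound $H_{I, K}(n) \geq n + b$ from Lemma \ref{I6}(ii) and the estimate $p \leq e_0 - b$ from Lemma \ref{I6}(iii), and then extending the range of summation up to $n = e_0 - b - 1$ (legitimate because the added terms are non-negative), the bound reduces to the elementary identity $\sum_{n=1}^{m-1} n(m-n) = \binom{m+1}{3}$ with $m = e_0 - b$.

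The principal obstacle is the careful bookkeeping in the $d \geq 3$ reduction: Lemma \ref{I2}(ii) alone preserves $e_j$ only for $j \leq d - 2$, so one must invoke Lemma \ref{I2}(iii) at every step (possible because each intermediate module remains Cohen-Macaulay and each superficial element is regular) to propagate $e_2$ down to dimension $2$ unchanged. A secondary subtlety is that the dim-$2$ to dim-$1$ step via Lemma \ref{I2}(iv) is only an inequality; fortunately this is exactly what is needed, since the Hilbert-function bounds of Lemma \ref{I6} attack $e_2(I, K)$ from above.
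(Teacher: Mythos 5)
Your proposal is correct and follows essentially the same route as the paper's proof: reduce to $d=2$ by superficial elements, pass to the one-dimensional quotient $N=M/xM$ where Lemma \ref{I2}(iv) gives $e_2(I,M)\le e_2(I,N)$, express $e_2(I,N)=\sum_{n\ge 1} n\,(e_0-H_{I,N}(n))$, and bound this sum via Lemma \ref{I6}(ii) and (iii). The only cosmetic difference is that you spell out the bookkeeping of Lemma \ref{I2}(ii)--(iii) in the reduction from $d\ge 3$ to $d=2$, which the paper subsumes under ``standard technique.''
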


\begin{proof}
By standard technique  we may assume that $d=2$.

Let $x \in I \backslash I^2$ be an $M$-superficial element for $I$.  
Let  $N:= M/xM$. 
By Lemma \ref{I2}(ii) and (iii),  $e_i(I,M) = e_i(I,N)$ for  $i = 0, 1$. 
For short, we write  $p := pn(I,N)$ and $e_0 := e_0(I,M) = e_0(I,N)$. 
Then
\begin{equation} \label{EB11}
P_{I,N}(z)   = \frac{H_{I,N}(0)+\sum_{i=0}^{p-1}(H_{I,N}(i)-H_{I,N}(i-1))z^i+(e_0 -H_{I,N}(p-1))z^{p}}{1-z}.
\end{equation}
By (\ref{EP1}), we have
\begin{eqnarray}
e_2(I,N) & = & \frac{\sum_{i=0}^{p-1}(i-1)i(H_{I,N}(i)-H_{I,N}(i-1))+p(p-1)(e_0-H_{I,N}(p - 1))}{2!}   \nonumber\\
& = & -\sum_{i=1}^{p-1}i H_{I,N}(i) +\frac{p(p-1)}{2}e_0  \nonumber  \\
& = & \sum_{i=1}^{p-1}  i(e_0-H_{I,N}(i))   \nonumber  \\
& \le & \sum_{i=1}^{e_0-b -1} i (e_0-H_{I,N}(i))  \ \  \text{(by  Lemma \ref{I6}(iii)) } \label{EB12}
\end{eqnarray}
\begin{eqnarray}
& \le  & \sum_{i=1}^{e_0-b -1} i (e_0- i -b - \ell (H^0_{G(I)_+}(G_I(N))_i)  \ \ \text{(by Lemma \ref{I6}(ii))}   \nonumber \\
& \le &  \sum_{i=1}^{e_0-b -1} i(e_0-i-b)  
=  \binom{e_0-b+1}{3} .   \label{EB13} 
\end{eqnarray}
Since $M$ is a Cohen-Macaulay module, by Lemma \ref{I2}(iv), 
\begin{equation}\label{EB14} e_2(I,N) = e_2(I,M) + \sum_{i=0}^n \ell\left(\frac{I^{i+1}M : x }{I^iM}\right) \ge e_2(I,M). 
\end{equation}
Hence the inequality (\ref{EB13}) gives $e_2(I,M) \le \binom{e_0-b+1}{3}$.
\end{proof}

\medskip
\begin{rem}
Assume that $IM \subseteq \mfr^b M$. If $e_0(I,M) \le b+1$, then by the above theorem, we get $e_2(I,M) \le 0$. 
From the famous result of Narita \cite{Na} on the non negativity of the second Hilbert coefficient (see \cite[Proposition 3.1]{RV2} for a short proof in the module case), this implies  $e_2(I,M) = 0$.
Hence we can omit this case when dealing with the border case of the above theorem. 
The following result say that if  the above bound is attained, then $b=1$ and $I$ satisfies the conditions of Proposition \ref{H3}.
\end{rem}

\begin{thm}\label{B2} 
Let  $M$ be a  Cohen-Macaulay module of $\dim(M)= d \ge 2$ over $(A,\mfr)$ and $I$  an $\mfr$-primary ideal.
Let $b$ be the largest integer such that $IM \subseteq \mfr^b M$.
Assume that $e_0(I,M) \ge b+2$. The following conditions are equivalent:
  \begin{itemize}
 \item[(i)]   $e_2(I,M)  = \binom{e_0(I,M) - b +1}{3},$
 \item[(ii)]  $P_{I,M}(z) = \textstyle{\frac{\ell(M/IM)+(1+b-\ell(M/IM))z+\sum_{i=2}^{e_0(I,M)-b}z^i}{(1-z)^d}},$
 \item[(iii)]   $\depth(G_I(M)) \geq d-1$ and  $e_1(I,M)  = \binom{e_0(I,M) - b +1}{2} + b - \ell(M/IM)$,
  \item[(iv)]    $\depth(G_I(M)) \geq d-1$, $\reg(G_I(M)) = e_0(I)  - b$ and $a_{d-1}(G_I(M)) \le 1-d$,
 \item[(v)] $\depth(G_I(M)) \geq d-1$ and $\reg(G_I(M)) = \binom{e_0(I,M) - b +2}{2} +b  - e_1(I,M) -\ell(M/IM)-1$.
 \end{itemize}
 If one of the above conditions holds, then $b=1$.
\end{thm}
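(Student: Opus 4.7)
The plan is to reduce to the one-dimensional case by an $M$-superficial sequence and appeal to Proposition~\ref{H3}. For the base case $d=2$, pick an $M$-superficial $x \in I$ and set $N = M/xM$; Lemma~\ref{I2}(ii)--(iii) together with $0:_M x = 0$ (which holds since $M$ is Cohen-Macaulay of positive dimension) give $e_0(I,N) = e_0(I,M)$, $e_1(I,N) = e_1(I,M)$, $\ell(N/IN) = \ell(M/IM)$, and $IN \subseteq \mfr^b N$. The plan is to establish the cycle (ii) $\Rightarrow$ (i) $\Rightarrow$ (iii) $\Rightarrow$ (ii), and to deduce (iv), (v) from (ii) via the Grothendieck-Serre formula under the depth hypothesis.

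The step (ii) $\Rightarrow$ (i) is a direct computation of $Q_{I,M}^{(2)}(1)/2$ from (\ref{EP1}). For (i) $\Rightarrow$ (iii), I would revisit the proof of Theorem~\ref{B1}: equality in (i) forces simultaneous equality in Lemma~\ref{I2}(iv)---so $(I^{i+1}M:x) = I^i M$ for all $i$, meaning $x^*$ is $G_I(M)$-regular and $\depth G_I(M) \geq 1 = d-1$---and in (\ref{EB12})--(\ref{EB13}), which reproduces the equality chain in the proof of Proposition~\ref{H1} applied to $N$, giving $e_1(I,N) = \binom{e_0-b+1}{2} + b - \ell(N/IN)$. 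Translating via the numerical identities above yields the $e_1$-equation of (iii), and Lemma~\ref{H3a} applied to $N$ simultaneously forces $b=1$. For (iii) $\Rightarrow$ (ii), the hypothesis $\depth G_I(M) \geq 1$ allows one to choose an $M$-superficial $x$ with $x^*$ being $G_I(M)$-regular; then Lemma~\ref{I2}(v) gives $P_{I,M}(z) = P_{I,N}(z)/(1-z)$, and $N$, now satisfying condition (i) of Proposition~\ref{H3}, has the Hilbert series predicted by (ii) of that proposition, so dividing by $(1-z)$ produces (ii) here.

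The equivalences (ii) $\Leftrightarrow$ (iv) $\Leftrightarrow$ (v) would follow from the Grothendieck-Serre formula: under $\depth G_I(M) \geq d-1$, only $H^{d-1}_{G_+}(G_I(M))$ and $H^d_{G_+}(G_I(M))$ can contribute to $H_{I,M}(n) - H\!P_{I,M}(n)$, and the closed-form Hilbert series (ii) pins these local cohomologies down degree by degree, yielding $\reg(G_I(M)) = e_0 - b$ and $a_{d-1}(G_I(M)) \leq 1-d$, which is (iv); the equivalence (iv) $\Leftrightarrow$ (v) is the same binomial identity $\binom{e_0-b+2}{2} - \binom{e_0-b+1}{2} = e_0 - b + 1$ used in the (iii) $\Leftrightarrow$ (iv) step of Proposition~\ref{H3}. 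The extension to $d \geq 3$ is by an $M$-superficial sequence of length $d-2$, which preserves all numerical invariants (Lemma~\ref{I2}(ii)--(iii) with $M$ Cohen-Macaulay), and, once the depth is in place, preserves the Hilbert series via iterated application of Lemma~\ref{I2}(v).

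The main obstacle I anticipate is the bookkeeping in this higher-dimensional descent: transferring condition (ii) between $M$ and $M/yM$ requires $y^*$ to be $G_I(M)$-regular, and one must carefully interleave the depth condition with the Hilbert series identity to close the inductive loop. Establishing $b=1$ early via Lemma~\ref{H3a} is useful, since it sidesteps the otherwise delicate issue of $b$ possibly changing under the superficial reduction.
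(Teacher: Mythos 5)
Your overall strategy coincides with the paper's: reduce to $d=2$ via a superficial element $x$, detect the regularity of $x^*$ on $G_I(M)$ from the equality $e_2(I,M)=e_2(I,N)$ through Lemma \ref{I2}(iv)--(v), transfer the problem to the one-dimensional module $N=M/xM$, and invoke Proposition \ref{H3} and Lemma \ref{H3a} (for $b=1$). Your cycle (ii) $\Rightarrow$ (i) $\Rightarrow$ (iii) $\Rightarrow$ (ii) is essentially the paper's argument and is sound, including the observation that equality in (\ref{EB12})--(\ref{EB13}) reproduces the equality analysis of Proposition \ref{H1} for $N$.

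There is, however, a genuine gap in your treatment of (iv) and (v). You claim that under $\depth G_I(M)\ge d-1$ the Grothendieck--Serre formula together with the closed-form Hilbert series of (ii) ``pins down'' $H^{d-1}_{G_+}(G_I(M))$ and $H^{d}_{G_+}(G_I(M))$ degree by degree. It does not: the formula only yields the alternating sum $(-1)^{d-1}\ell(H^{d-1}_{G_+}(G_I(M))_n)+(-1)^{d}\ell(H^{d}_{G_+}(G_I(M))_n)$, so a priori both modules could be nonzero in the same degree and cancel; one cannot extract $a_{d-1}(G_I(M))\le 1-d$, nor the exact value of $\reg(G_I(M))$, from the Hilbert series alone. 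The paper instead uses the long exact sequence of local cohomology induced by the $G_I(M)$-regular element $x^*$, which gives injections $H^{d-1}_{G_+}(G_I(M))_{n-1}\hookrightarrow H^{d-1}_{G_+}(G_I(M))_{n}$ in the range where $H^{d-2}_{G_+}(G_I(N))_n=0$, together with the one-dimensional input $a_{0}(G_I(N))\le 0$ supplied by Proposition \ref{H3}(iii); the same exact sequence is needed to carry (iv) back down to $N$ for the converse implications (iv),(v) $\Rightarrow$ (ii), which your sketch does not address at all. A second, smaller omission: in the induction for $d\ge 3$, passing from $\depth G_I(N)\ge d-2$ to $\depth G_I(M)\ge d-1$ is not a consequence of Lemma \ref{I2}(v) (you do not know that $e_d$ is preserved); it requires Sally's descent (\cite[Lemma 2.2]{HM}, \cite[Lemma 1.4]{RV2}), which is precisely the step that puts ``the depth in place'' before the Hilbert series can be divided by $1-z$.
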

\begin{proof} For simplicity, we set $e_i := e_i(I,M),\ i\in \{ 0,1,2\} $ and $G:= G(I)$.
First, let $d=2$.

By (\ref{EP1}) it is clear that (ii) implies (i).
Assume (i), i.e. $e_2 = \binom{e_0 - b +1}{3}$.  
Let $x \in I \setminus I^2$ be an $M$-superficial element for $I$.  
Let $N:= M/xM$.   
By Lemma \ref{I2}, $e_i(I,N) = e_i(I,M) = e_i$ for $i= 0,1$, and by (\ref{EB14}), $e_2(I,M) \le e_2(I,N)$.  
Since $e_2(I,N) \le \binom{e_0 - b +1}{3}$ (see (\ref{EB13})),  we must have  $e_2(I,N) = e_2(I,M) = \binom{e_0 - b +1}{3}$. 
By Lemma \ref{I2}(v), the initial form $x^*\in I/I^2$ is a regular element on $G_I(M)$. This means $\depth G_I(M) >0$. 
Note that $G_I(N) \cong G_I(M)/x^*G_I(M)$.

Moreover, since $e_0 \ge b+2$, using (\ref{EB12}) and  (\ref{EB13}) we also have $p = e_0-b$, where  $p:=pn(I,N)$, and
$H_{I,N}(n) = n+b$ for  all $1\le  n\leq p$. By (\ref{EB11}) we then get
\begin{equation}\label{EB21}P_{I,N}(z) = \frac{\ell(N/IN)+(1+b-\ell(N/IN))z+\sum_{i=2}^{e_0-b}z^i}{1-z}.
\end{equation}  
Therefore, using Lemma \ref{I2}(v) again, we get (ii). Thus (i) $\Longleftrightarrow $ (ii) and they imply $\depth G_I(M) >0$.

\noindent
(ii) $\Longrightarrow $ (iii) The first part  $\depth G_I(M) >0$ was just shown, while the second part immediately follows from (\ref{EP1}). 

\noindent
(ii) $\Longrightarrow$  (iv) and (v) The first part  $\depth G_I(M) >0$ was  shown above. 
Since $x^*$ is a regular element on $G_I(M)$, by Lemma \ref{I2}(v), it implies that (\ref{EB21}) holds. This means $(I,N)$ satisfies the condition (ii) of Proposition \ref{H3}.
By the conditions (iii) and (iv) of Proposition \ref{H3}, we get
$$\begin{array}{ll}
\reg(G_I(N)) & = \binom{e_0 - b +2}{2} +b  - e_1 -\ell(N/IN)-1,\\
\reg(G_I(N)) & = e_0  - b\   \text{and}\  a_0(G_I(N)) \le 0.
\end{array}$$
Note that  $\reg G_I(M) = \reg G_I(M)/x^* G_I(M) = \reg G_I(N)$ and $\ell(M/IM) =\ell(N/IN)$. Hence 
$$\begin{array}{ll}
\reg(G_I(M)) & = \binom{e_0 - b +2}{2} +b  - e_1 -\ell(M/IM)-1,\\
\reg(G_I(M)) & = e_0  - b.
\end{array}$$
 Thus (v) is proved. Further, since $a_0(G_I(N)) \le 0$, from  the exact sequence
 $$0= H^0_{G_+} (G_I(N))_n \cong H^0_{G_+} (G_I(M)/ x^*G_I(M))_n \rightarrow H^1_{G_+} (G_I(M))_{n-1} \rightarrow  H^1_{G_+} (G_I(M))_n,$$
we get inclusions
$$H^1_{G_+} (G_I(M))_{n-1} \hookrightarrow  H^1_{G_+} (G_I(M))_n,$$
for all $n\ge 1$. This implies that $H^1_{G_+} (G_I(M))_n = 0$ for all $n\ge 0$, or equivalently,  $a_1(G_I(M)) \le - 1$. Summing up,  (ii) also implies (iv).

If one of the conditions (iii), (iv) and (v) is fulfilled,  then one of the conditions (i), (iii) or (iv) in Proposition \ref{H3} holds for the pair $(I,N)$. Hence by Proposition \ref{H3}(ii)
$$\begin{array}{ll}
P_{I,N}(z) & = \frac{\ell(N/IN)+(b+1-\ell(N/IN))z+\sum_{i=2}^{e_0-b}z^i}{(1-z)}
\\
& = \frac{\ell(M/IM)+(b+1-\ell(M/IM))z+\sum_{i=2}^{e_0-b}z^i}{(1-z)}.
\end{array}$$
Using Lemma \ref{I2}(v), we then get (ii). The proof of the case $d=2$ is completed.

Assume now $d>2$. Then (ii) $\Longrightarrow $ (i) follows from (\ref{EP1}). 

Assume (i). Let $x \in I \setminus I^2$ be an $M$-superficial element for $I$ and $N:= M/xM$.  Then $\dim N= d-1$ and the pair $(I,N)$ satisfies the condition (i). 
By induction hypothesis, $\depth G_I(N) \ge d-2$. 
Using Sally's descent  (see \cite[Lemma 2.2]{HM} or \cite[Lemma 1.4]{RV2}), we can deduce that $\depth G_I(M) \ge d-1$. 
This implies that $x^*$ is regular on $G_I(M)$. 
By Lemma \ref{I2}(v), (ii) follows. 
Further, we  have $\reg(G_I(M)) = \reg(G_I(N))$. 
Using the exact sequence
 $$ H^{d-2}_{G_+} (G_I(N))_n \cong H^{d-2}_{G_+} (G_I(M)/x^* G_I(M))_n \rightarrow H^{d-1}_{G_+} (G_I(M))_{n-1} \rightarrow H^{d-1}_{G_+} (G_I(M))_{n} ,$$
 one can see that $a_{d-2}(G_I(N)) \le 2-d$ implies $a_{d-1}(G_I(M)) \le 1-d$. 
 Since $(I,N)$ satisfies the condition (iii), (iv), (v), we then get that also $(I,M)$ satisfies these conditions.

 Conversely,  assume that $\depth G_I(M) \ge d-1$. 
 Then, by Sally's descent,  we get
 $\depth(G_I(N)) \ge d-2$ and $x^*$ is regular on $G_I(M)$. Hence, we have the following  exact sequence
 $$0 \rightarrow H^{d-2}_{G_+} (G_I(N))_n \cong H^{d-2}_{G_+} (G_I(M)/x^* G_I(M))_n \rightarrow H^{d-1}_{G_+} (G_I(M))_{n-1}.$$
 From this one can see that $a_{d-1}(G_I(M)) \le 1-d$ implies $a_{d-2}(G_I(N)) \le 2-d$. Since $e_i(I,M) = e_i(I,N)$ for all $i\le 2$, if $(I,M)$ satisfies one of the conditions (iii), (iv) and (v), then the same condition holds for $(I,N)$. Therefore,  $(i)$ holds for $(I,N)$, whence also holds for $(I,M)$.
 
 Finally, if one of conditions (i)...(v) is satisfied, then from the condition (iv) we see that $(I,M)$ satisfies the condition in Lemma \ref{H3a}. Hence $b=1$.
\end{proof}

\medskip
\begin{exm}
Using Example \ref{T3}, we can see that the pair $(I,M)$ satisfies the conditions of Theorem \ref{B2}, where  
$$I=(t^a, t^{a+1}, u_1,...,u_{d-1}) \subset A= k[[t^a, t^{a+1}, t^{a^2-a-1}, u_1,...,u_{d-1}]],$$ ($a\ge 3$, $d\ge 2$) and $M=A$. 
\end{exm}

The above theorem says that if $e_0(I,M) \ge b+2$ and $b\ge 2$, then the inequality in Theorem \ref{B1} is strict. For the case $M=A$, using the bound of Theorem \ref{EB3}, we can give a better bound in the case $b\ge 2$. We need some more preparation.

Recall that the ideal $J\subseteq I$ is called an {\it $M$-reduction} of $I$ if $I^{n+1} M = JI^nM$ for all $n\gg 0$. The  number:
$$r_J(I,M) = \min\{n\ge 0|\  I^{n+1} M = JI^nM\} $$
is called the {\it $M$-reduction number of $I$ with respect to $J$}.   An $M$-reduction of $I$ is called {\it minimal} if it does not strictly contain another $M$-reduction of $I$. The number
$$r(I,M) := \min\{ r_J(I,M)|\ J\ \text{is a minimal  \ } M\text{-reduction of \ } I\}$$
is called the {\it $M$-reduction number } of $I$. The above definitions of reductions and reduction numbers remain valid for any ideal $I$ of a Noetherian ring $R$ and any finitely generated $R$-module $M$.

\begin{rem} \label{AB4}We recall here some facts on reductions. 
 \begin{itemize}
\item[(i)] A minimal $M$-reduction of $I$ is generated by exactly $d$ elements.
\item[(ii)] A minimal $M$-reduction of $I$ can be generated by a maximal $M$-superficial sequence for $I$.
\end{itemize} 
\end{rem}

Below are some  relationships between the reduction number and Hilbert coefficients.

\begin{lem} \label{AB5} Let $M$ be an one-dimensional Cohen-Macaulay module  and $I$ an $\mfr$-primary ideal such that $IM \subseteq \mfr^bM$ for some positive integer $b$. Then
$$r(I,M) \le e_0(I,M) - b.$$
\end{lem}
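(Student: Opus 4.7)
The plan is to exhibit a minimal $M$-reduction $J=(x)$ of $I$ whose reduction number $r_J(I,M)$ is at most $pn(I,M)$, and then invoke Lemma \ref{I6}(iii) which gives $pn(I,M) \le e_0(I,M)-b$. Since the residue field is infinite, an $M$-superficial element $x \in I$ exists; by Remark \ref{AB4}, the ideal $J=(x)$ is a minimal $M$-reduction of $I$, and since $M$ is Cohen-Macaulay of dimension one, $x$ is automatically $M$-regular.

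The central step is a length identity. Because $x$ is $M$-regular and $x\cdot I^n M \subseteq xI^n M$, multiplication by $x$ descends to an injective map $M/I^n M \to M/xI^n M$ with cokernel $M/xM$, yielding the short exact sequence
$$0 \longrightarrow M/I^n M \xrightarrow{\ x\ } M/xI^n M \longrightarrow M/xM \longrightarrow 0.$$
Since $(x)$ is a reduction of $I$ one has $\ell(M/xM)=e_0(I,M)$, and combining additivity of length with $\ell(M/I^{n+1}M)=\ell(M/I^n M)+H_{I,M}(n)$ and the inclusion $xI^n M \subseteq I^{n+1}M$ gives
$$\ell(I^{n+1}M/xI^n M) \;=\; \ell(M/xI^n M) - \ell(M/I^{n+1}M) \;=\; e_0(I,M) - H_{I,M}(n).$$
Consequently $I^{n+1}M = xI^n M$ if and only if $H_{I,M}(n)=e_0(I,M)$.

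Since $\dim M=1$, the Hilbert polynomial of $G_I(M)$ is the constant $e_0(I,M)$, so $H_{I,M}(n)=e_0(I,M)$ for every $n \ge pn(I,M)$. By the identity above, $I^{n+1}M=xI^n M$ for all such $n$, whence $r_J(I,M) \le pn(I,M)$. Lemma \ref{I6}(iii) now yields $r(I,M) \le r_J(I,M) \le pn(I,M) \le e_0(I,M) - b$, as desired. The real content is the length identity in the second paragraph; the only subtle point is the $M$-regularity of the superficial element $x$, which comes for free from Cohen-Macaulayness together with $\dim M=1$.
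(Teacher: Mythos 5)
Your proof is correct, but it replaces the paper's key intermediate step with a different, more elementary one. The paper passes to the associated graded module: it observes that $r_{(x)}(I,M)=r_{(x^*)}(G_+,G_I(M))$, invokes Trung's bound $r_{(x^*)}(G_+,G_I(M))\le \reg(G_I(M))$ from \cite[Proposition 3.2]{Tr}, and then uses Lemma \ref{I6}(i) to identify $\reg(G_I(M))$ with $pn(I,M)$. You instead prove $r_{(x)}(I,M)\le pn(I,M)$ directly on the level of $M$ via the length identity $\ell(I^{n+1}M/xI^nM)=e_0(I,M)-H_{I,M}(n)$, which is valid exactly as you argue (the exact sequence $0\to M/I^nM\to M/xI^nM\to M/xM\to 0$ uses only the $M$-regularity of $x$, guaranteed here by Cohen--Macaulayness in dimension one, and $\ell(M/xM)=e_0(I,M)$ because $(x)$ is a reduction). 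Both arguments then finish identically with Lemma \ref{I6}(iii), i.e.\ $pn(I,M)\le e_0(I,M)-b$. What your route buys is self-containedness: it avoids the external regularity result of Trung and Lemma \ref{I6}(i), and your length identity is essentially the formula $e_1(I,M)=\sum_j\ell(I^{j+1}M/xI^jM)$ from \cite[Lemmas 2.1 and 2.2]{RV2} that the paper uses anyway in Lemma \ref{AB6}, so the two lemmas become more uniform. What the paper's route buys is the explicit link $r(I,M)\le\reg(G_I(M))$, which is a stronger structural statement than $r(I,M)\le pn(I,M)$ in general (they coincide here only because of Lemma \ref{I6}(i)) and generalizes more readily beyond dimension one.
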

\begin{proof}
Assume that $x\in I$ is an $M$-superficial element for $I$ such that $r(I,M) = r_{(x)}(I,M)$. 
Then $r(I,M) = r_{(x^*)}(G_+, G_I(M))$.
 By \cite[Proposition 3.2]{Tr}, 
 $$r_{(x^*)}(G_+, G_I(M)) \le \reg(G_I(M)).$$ 
 By Lemma \ref{I6}(i) and (iii), $\reg(G_I(M)) = pn(I,M) \le e_0(I,M) -b$. 
 Hence $r(I,M) \le e_0(I,M) - b$.
 \end{proof}
 
 \begin{lem} \label{AB6} 
 Let $M$ be an one-dimensional Cohen-Macaulay module  and $I$ an $\mfr$-primary ideal. 
 Then
 $$e_2(I,M) \le (r'(I,M) -1)e_1(I,M),$$
 where we set $r'(I,M) := \max\{1, r(I,M)\}$. 
\end{lem}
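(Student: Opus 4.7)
The plan is to choose an $M$-superficial element $x\in I$ that also generates a minimal $M$-reduction of $I$ realising the reduction number, i.e.\ $r_{(x)}(I,M)=r(I,M)=:r$; the existence of such an $x$ is the standard consequence of $A/\mfr$ being infinite and is exactly the choice made at the start of the proof of Lemma~\ref{AB5}. Since $M$ is one-dimensional and Cohen-Macaulay, $x$ is $M$-regular, and iterating the defining relation $I^{r+1}M=xI^rM$ gives $I^{n+1}M=xI^nM$ for every $n\ge r$.

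The first step is to show $h_n:=H_{I,M}(n)=e_0(I,M)=:e_0$ for all $n\ge r$. Since $x$ is $M$-regular, multiplication by $x$ is injective on every $I^nM$; the surjection $M/xM\twoheadrightarrow I^nM/xI^nM$ induced by $I^n$ gives $\ell(I^nM/xI^nM)\le\ell(M/xM)=e_0$, and combined with the inclusion $I^{n+1}M\supseteq xI^nM$ this yields the standard bound $h_n\le e_0$ for every $n$. For $n\ge r$ the inclusion is an equality by the previous paragraph, hence $h_n=e_0$; in particular $pn(I,M)\le r$.

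Next I would extract the formulas
\[
e_1(I,M)=\sum_{n=0}^{r-1}(e_0-h_n),\qquad e_2(I,M)=\sum_{n=1}^{r-1}n\,(e_0-h_n)
\]
from the Hilbert series, exactly as in the computation inside the proof of Theorem~\ref{B1}: writing $Q_{I,M}(z)=(1-z)P_{I,M}(z)=h_0+\sum_{n=1}^{r}(h_n-h_{n-1})z^n$ (the sum terminates since $pn(I,M)\le r$), applying $e_i(I,M)=Q_{I,M}^{(i)}(1)/i!$ and a short Abel summation. Every summand in the two sums above is non-negative by the preceding step.

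The target inequality then follows by cases. If $r\ge 2$, bounding $n\le r-1$ in every term of the sum for $e_2$ and enlarging the index range by the non-negative term at $n=0$ when comparing with $e_1$ gives
\[
e_2(I,M)\le (r-1)\sum_{n=1}^{r-1}(e_0-h_n)\le (r-1)\sum_{n=0}^{r-1}(e_0-h_n)=(r-1)\,e_1(I,M)=(r'-1)\,e_1(I,M).
\]
If $r\le 1$, then $h_n=e_0$ for all $n\ge 1$, so the sum for $e_2$ is empty and $e_2(I,M)=0=(r'-1)\,e_1(I,M)$. The only mildly nontrivial ingredient is the choice, in the first paragraph, of an $x$ that is simultaneously $M$-superficial and realises $r(I,M)$; everything afterwards is a direct Hilbert-series calculation.
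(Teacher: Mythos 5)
Your proof is correct and follows essentially the same route as the paper: the paper chooses the same superficial $x$ with $r_{(x)}(I,M)=r(I,M)$ and quotes \cite[Lemmas 2.1 and 2.2]{RV2} for the identities $e_1(I,M)=\sum_{j=0}^{r-1}\ell(I^{j+1}M/xI^jM)$ and $e_2(I,M)=\sum_{j=1}^{r-1}j\,\ell(I^{j+1}M/xI^jM)$, which are exactly your sums $\sum_j(e_0-h_j)$ and $\sum_j j(e_0-h_j)$ because $e_0-h_j=\ell(I^{j+1}M/xI^jM)$, and the termwise estimate at the end is identical; your only real difference is that you re-derive these identities from the Hilbert series instead of citing them. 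The one blemish is your justification of $h_n\le e_0$: there is no natural surjection $M/xM\twoheadrightarrow I^nM/xI^nM$; the correct (and standard) argument is that $I^nM$ is again a one-dimensional Cohen--Macaulay module with $e_0(x,I^nM)=e_0(x,M)$, so in fact $\ell(I^nM/xI^nM)=\ell(M/xM)=e_0$, which yields both $h_n\le e_0$ (the fact the paper itself invokes from \cite{RV2}) and the nonnegativity of each summand that your final inequality needs.
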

\begin{proof} 
Assume that $x\in I$ is an $M$-superficial element for $I$ such that $r:= r(I,M) = r_{(x)}(I,M)$. Set $r':= \max\{1, r\}$. 
By \cite[Lemmas 2.1 and 2.2]{RV2}
$$e_1(I,M) = \sum_{j=0}^{r-1}\ell(I^{j+1}M/xI^jM).$$
Hence
$$\begin{array}{ll}
e_2(I,M) &= \sum_{j=1}^{r-1}j \ell(I^{j+1}M/xI^jM) \\ \\
&\le (r'-1)\sum_{j=0}^{r-1}\ell(I^{j+1}M/xI^jM) = (r'-1) e_1(I,M).
\end{array}$$
\end{proof}
Using the above two lemmas, we can give a new bound on $e_2(I,M)$.

\begin{prop} \label{AB7} Let $M$ be a  Cohen-Macaulay module  of dimension $d\ge 2$ and $I$ an $\mfr$-primary ideal such that $IM \subseteq \mfr^bM$ for some positive integer $b$. 
Assume that $e_0(I,M) \ge b+1$. Then
$$e_2(I,M) \le (e_0(I,M) - b -1)e_1(I,M).$$
\end{prop}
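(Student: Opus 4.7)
My plan is to reduce to the one-dimensional case and then invoke Lemmas \ref{AB5} and \ref{AB6} directly. The overall strategy mirrors the reduction used in the proof of Theorem \ref{B1}.

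First, I would cut down to $d = 2$ by the standard Sally-style reduction: choose an $M$-superficial sequence $x_1, \ldots, x_{d-2}$ for $I$ and set $M' := M/(x_1,\ldots,x_{d-2})M$. Because $M$ is Cohen-Macaulay, each $x_i$ is a nonzerodivisor on the corresponding successive quotient, so $M'$ is Cohen-Macaulay of dimension two. Iterated application of Lemma \ref{I2}(ii)--(iii) (with $\ell(0:x_i) = 0$ at each step) preserves $e_0$, $e_1$, and $e_2$, and the hypothesis $IM' \subseteq \mfr^b M'$ is inherited. So I may assume $d = 2$.

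Next, I would pick an $M$-superficial element $x \in I$ and set $N := M/xM$, a one-dimensional Cohen-Macaulay module with $IN \subseteq \mfr^b N$. Lemma \ref{I2}(ii)--(iii) gives $e_0(I,N) = e_0(I,M)$ and $e_1(I,N) = e_1(I,M)$, and Lemma \ref{I2}(iv) together with $\ell(0:x) = 0$ yields the monotonicity (already used in the proof of Theorem \ref{B1})
$$ e_2(I,N) \;=\; e_2(I,M) + \sum_{i \ge 0} \ell\bigl((I^{i+1}M : x)/I^i M\bigr) \;\ge\; e_2(I,M). $$
Lemma \ref{AB5} applied to $N$ gives $r(I,N) \le e_0(I,N) - b = e_0(I,M) - b$, and since $e_0(I,M) \ge b+1$ we get $r'(I,N) - 1 \le e_0(I,M) - b - 1$. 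Lemma \ref{AB6} then produces
$$ e_2(I,N) \le (r'(I,N) - 1)\, e_1(I,N) \le (e_0(I,M) - b - 1)\, e_1(I,M), $$
where I use $e_1(I,M) \ge 0$, which holds for Cohen-Macaulay modules. Combining this with $e_2(I,M) \le e_2(I,N)$ gives the desired bound.

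The one step where care is genuinely needed is the monotonicity $e_2(I,N) \ge e_2(I,M)$ in the reduction from dimension two to dimension one: this is precisely where the Cohen-Macaulay hypothesis is essential, via $\ell(0:x) = 0$ and the nonnegativity of the colon-length correction in Lemma \ref{I2}(iv). Without this hypothesis the final descent would lose control of $e_2$ and the whole strategy would break. Once this is in place, everything else is a direct application of the one-dimensional Lemmas \ref{AB5} and \ref{AB6}.
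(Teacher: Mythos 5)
Your proposal is correct and follows essentially the same route as the paper: reduce to $d=2$, pass to $N=M/xM$ for a superficial $x$, apply Lemma \ref{AB5} and Lemma \ref{AB6} to the one-dimensional module $N$, and conclude via the monotonicity $e_2(I,M)\le e_2(I,N)$ from (\ref{EB14}). Your explicit remarks that $e_0(I,M)\ge b+1$ is what makes $r'(I,N)-1\le e_0(I,M)-b-1$ and that $e_1(I,M)\ge 0$ is needed in the final inequality are points the paper passes over more quickly, but the argument is the same.
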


\begin{proof} By standard technique, we only need to consider the case $d=2$. 
Let $x\in I$  be an $M$-superficial element for $I$. Set $N= M/xM$. 
Then $N$ is an one-dimensional  Cohen-Macaulay module. 
By the assumption, $e_0(I,M) - b\ge 1$. Hence, by Lemma \ref{AB5},  $r'(I,M) \le e_0(I,M) - b$. 
Applying  Lemma \ref{AB6} to $N$,  by Lemma \ref{I2}(ii) and (iii), we get
\begin{multline*}
e_2(I, N) \le (r'(I,M) -1)e_1(I,M)\\
\le (e_0(I,N)-b-1)e_1(I ,N) = (e_0(I,M) - b-1) e_1(I,M).
\end{multline*}
By (\ref{EB14}), $e_2(I,M) \le e_2(I,N)$. Hence $e_2(I,M) \le (e_0(I,M) - b -1)e_1(I,M).$
\end{proof}

\medskip
Combining the above result with Theorem \ref{EB3} we get the following bound which is clearly better the bound of Theorem \ref{B1} in the case $M=A$ and $b\ge 2$.

\begin{thm} \label{AB8} 
Let $I$ be an $\mfr$-primary ideal of a  Cohen-Macaulay ring $(A,\mfr)$  of dimension $d\ge 2$ and such that $I \subseteq \mfr^b$ for some positive  integer $b$. 
Assume that $e_0(I,M) \ge b+1$. Then
$$e_2(I) \le \frac{3}{2b-1}\binom{e_0(I) - b+1}{3} - (e_0(I)-b-1)\binom{\mu(\mfr)-d}{2}.$$
\end{thm}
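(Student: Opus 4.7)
The plan is to bootstrap Proposition \ref{AB7} with the bound from Theorem \ref{EB3}. Specialising Proposition \ref{AB7} to the case $M=A$, the hypotheses $\dim A=d\ge 2$, $I\subseteq\mfr^b$ and $e_0(I)\ge b+1$ immediately give
$$ e_2(I) \;\le\; (e_0(I)-b-1)\,e_1(I). $$
Because $e_0(I)\ge b+1$, the factor $(e_0(I)-b-1)$ is a nonnegative integer, so we may multiply any upper bound for $e_1(I)$ by this factor without flipping the inequality.

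Next I would invoke Theorem \ref{EB3} (which is stated for all $d\ge 1$, hence applies here), giving
$$ e_1(I) \;\le\; \frac{1}{2b-1}\binom{e_0(I)-b+1}{2} - \binom{\mu(\mfr)-d}{2}. $$
Substituting into the inequality from Proposition \ref{AB7} and distributing yields
$$ e_2(I) \;\le\; \frac{e_0(I)-b-1}{2b-1}\binom{e_0(I)-b+1}{2} - (e_0(I)-b-1)\binom{\mu(\mfr)-d}{2}. $$

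The final step is the purely combinatorial identity that rewrites the first term. Setting $k := e_0(I)-b-1\ge 0$, one has
$$ k\binom{k+2}{2} \;=\; k\cdot\frac{(k+2)(k+1)}{2} \;=\; 3\cdot\frac{(k+2)(k+1)k}{6} \;=\; 3\binom{k+2}{3}, $$
so $(e_0(I)-b-1)\binom{e_0(I)-b+1}{2}=3\binom{e_0(I)-b+1}{3}$, and the claimed bound
$$ e_2(I) \;\le\; \frac{3}{2b-1}\binom{e_0(I)-b+1}{3} - (e_0(I)-b-1)\binom{\mu(\mfr)-d}{2} $$
follows at once. There is no real obstacle here since the two ingredients have already been established; the only point requiring comment is the boundary case $e_0(I)=b+1$, where the factor $(e_0(I)-b-1)$ vanishes and the claim reduces to $e_2(I)\le 0$, which is consistent with Narita's nonnegativity $e_2(I)\ge 0$ (forcing $e_2(I)=0$ in that case).
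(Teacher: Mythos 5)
Your proposal is correct and follows essentially the same route as the paper: apply Proposition \ref{AB7} to get $e_2(I)\le (e_0(I)-b-1)e_1(I)$, insert the bound of Theorem \ref{EB3}, and use the identity $k\binom{k+2}{2}=3\binom{k+2}{3}$. The only cosmetic difference is that the paper first reduces to $d=2$ before quoting Theorem \ref{EB3}, whereas you apply it directly in dimension $d$, which works just as well since that theorem is stated for all $d\ge 1$.
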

\begin{proof} We may assume that $d=2$. For simplicity we set $e_i := e_i(I),\ i=0,1,2$. By Theorem \ref{EB3},
$$e_1 \le \frac{1}{2b-1}\binom{e_0 - b+1}{2} - \binom{\mu(\mfr)-2}{2}.$$
Hence, by Proposition \ref{AB7},
$$\begin{array}{ll}
e_2& \le (e_0- b -1)e_1\\ \\
&\le (e_0- b -1) \left\lbrace \frac{1}{2b-1}\binom{e_0 - b+1}{2} - \binom{\mu(\mfr)-2}{2} \right\rbrace \\ \\
& = \frac{3}{2b-1}\binom{e_0 - b+1}{3} - (e_0-b-1)\binom{\mu(\mfr)-2}{2}.
\end{array}$$
\end{proof}

\noindent {\bf Acknowledgement}.  
 This work is the result of discussions during various stays of all three authors  at Vietnam Institute for Advanced Study in Mathematics (VIASM). The authors would like to thank VIASM for the financial support and generous hospitality.
 The second author is partially supported by PID2019-104844GB-I00, while the third author is partially supported by NCXS02.01/22-23 of Vietnam Academy of Sciences and Technology.


\begin{thebibliography}{99}

\bibitem{D}
  L.X. Dung, {\em Castelnuovo-Mumford regularity of associated graded modules in dimension one},  Acta Math. Vietnam. {\bf  38},  541 - 550 (2013).
 
  \bibitem{E0} J. Elias, {\em Characterization of the Hilbert-Samuel polynomials of curve singularities}, Compositio Math. {\bf 74} (1990),  135 - 155.
 
   \bibitem{E1}
J. Elias, {\em  On the first normalized Hilbert coefficient}, J. Pure Appl. Algebra {\bf 201} (2005),  116 - 125.

 \bibitem{E2}
J. Elias, {\em Upper bounds of Hilbert coefficients  and Hilbert functions},  Math. Proc. Cambridge Philos. Soc. {\bf 145}  (2008),  87 - 94.

\bibitem{ERV1}
J. Elias, M.E.  Rossi and  G. Valla, {\em  On the coefficients of the Hilbert polynomial},  J. Pure Appl. Algebra, {\bf 108}(1996), 3 - 60.


 \bibitem{HH} K. Hanumanthu and C. Huneke, {\em Bounding the first Hilbert coefficient}, 
Proc. Amer.  Math. Soc. {\bf 140}(2012),  109 - 117.

\bibitem{H} L.T. Hoa, {\em   Reduction numbers of equimultiple ideals}, J. Pure  Appl. Algebra, {\bf 109}(1996), 111 - 126.

\bibitem{HM}
S. Huckaba and T. Marley, {\em Hilbert coefficients and the depths of associated graded rings},  J. London Math. Soc. (2) {\bf 56} (1997),  64 - 76.

\bibitem{K} D. Kirby, {\em A note on superficial elements of an ideal in a local ring},  Quart. J. Math. Oxford Ser. (2) {\bf 14}(1963), 21 - 28. 

\bibitem{KM}
D. Kirby and H.A. Mehran, {\em A note on the coefficients of the Hilbert-Samuel polynomial for a Cohen-Macaulay module}, J. London Math. Soc. (2) {\bf 25} (1982), 449-457.

\bibitem{Na}
M. Narita, {\em A note on the coefficients of Hilbert characteristic functions in semi-regular local rings}, Proc. Cambridge Philos. Soc., {\bf 59} (1963), 269 - 275.

\bibitem{RR} L. Ratliff and D. Rush, {\em   Two notes on reductions of ideals},  Indiana Univ. Math. J. {\bf 27}(1978), 929 - 934.

\bibitem{Rh}
C.P.L. Rhodes, {\em The Hilbert-Samuel polynomial in a filtered module}, J. London Math. Soc. (1) {\bf 3} (1971), 73-85.

 \bibitem{RV1} M.E. Rossi and G. Valla, {\em The Hilbert function of the Ratliff-Rush filtration}, J. Pure Appl. Algebra {\bf 201} (2005),  25 - 41.

 \bibitem{RV2} M.E. Rossi  and G. Valla, {\em Hilbert function of filtered modules}, Lect. Notes UMI 9, Springer-Verlag 2010.

\bibitem{Sa} J.D.  Sally, {\em  On the associated graded ring of a local Cohen-Macaulay ring},  J. Math. Kyoto Univ. {\bf 17}(1977),  19 - 21.

\bibitem{Tr}  N.V. Trung, {\em  Reduction exponent and degree bound for the defining equations of graded rings},  Proc. Amer. Math. Soc. {\bf 101}(1987), 229 - 236. 
\end{thebibliography}
\end{document}